\documentclass[UTF-8,reqno,12pt]{amsart}
\usepackage{enumerate}
\setlength{\topmargin}{0cm}
\setlength{\oddsidemargin}{0.5cm}\setlength{\evensidemargin}{0.5cm}
\setlength{\textwidth}{15truecm}\setlength{\textheight}{21.2truecm}
\usepackage{amssymb,url,color}
\usepackage{hyperref}
\usepackage{amsmath,amsthm}
\usepackage{mathrsfs}
\usepackage[notref,notcite]{showkeys}
\usepackage{bm}

\allowdisplaybreaks[4]

\newtheorem{thm}{Theorem}[section]
\theoremstyle{Condition}

\newtheorem{cor}{Corollary}[section]

\newtheorem{lem}{Lemma}[section]
\newtheorem{rem}{Remark}[section]

\theoremstyle{Problem}

\theoremstyle{Assumption}

\theoremstyle{Definition}

\numberwithin{equation}{section}

\def\beq{\begin{equation}}
\def\deq{\end{equation}}
\setlength{\parskip}{0.5em}

\allowdisplaybreaks 

\bfseries\rmfamily 

\def\cC{{\mathcal C}}

\def\cF{{\mathcal F}}

\def\cP{{\mathcal P}}

\def\mE{{\mathbb E}}

\def\mN{{\mathbb N}}

\def\mR{{\mathbb R}}

\def\mW{{\mathbb W}}

\def\geq{\geqslant}
\def\leq{\leqslant}

\def\a{\alpha}
\def\om{\omega}
\def\Om{\Omega}

\def\d{\delta}

\def\l{\lambda}
\def\s{\sigma}

\def\[{{\Big[}}
\def\]{{\Big]}}
\def\<{{\langle}}
\def\>{{\rangle}}
\def\({{\Big(}}
\def\){{\Big)}}

\def\dif{{\rm d}}

\def\={&\!\!=\!\!&}
\def\bt{\begin{theorem}}
\def\et{\end{theorem}}
\def\bl{\begin{lemma}}
\def\el{\end{lemma}}
\def\br{\begin{rem}}
\def\er{\end{rem}}

\begin{document}

\title[Euler's scheme of Mckean-Vlasov SDEs with non-Lipschitz coefficients]
{Euler's scheme of Mckean-Vlasov SDEs with non-Lipschitz coefficients}

\author{Zhen Wang, Jie Ren and Yu Miao}

\address{Zhen Wang: College of Mathematics and Information Statistics, Henan Normal University, Xinxiang, Henan, 453000, P.R.China}
\email{wangzhen881025@163.com}
\address{Jie Ren: College of Mathematics and Information Statistics, Henan University of Economics and Law, Zhengzhou, Henan, 450000, P.R.China}
\email{20130006@huel.edu.cn}

\address{Yu Miao: College of Mathematics and Information Statistics, Henan Normal University, Xinxiang, Henan, 453000, P.R.China}
\email{yumiao728@gmail.com}

\thanks{This work is partially supported by an NNSFC grant of China (No.11901154) and (No. 11971154)}

\begin{abstract}
\ \ In this paper, we show the strong well-posedness of Mckean-Vlasov SDEs
 with non-Lipschitz coefficients. Moreover, propagation of chaos and the convergence rate for Euler's scheme of Mckean-Vlasov SDEs
  are also obtained.
 \\
Keywords: Mckean-Vlasov SDEs, Propagation of chaos, Euler approximation, Non-Lipschitz.
\end{abstract}

\subjclass[2000]{}

\maketitle

\section{Introduction}

Let $\left(\Om,\cF,\cP;(\cF_t)_{t\geq 0}\right)$ be a complete filtration probability space, endowed with a standard $d$-dimensional Brownain motion $(W_t)_{t\geq 0}$ on the probability space. Consider the following Mckean-Vlasov stochastic differential equations:
\beq\label{a-1}
\dif X_{t}=\int_{\mR^d}b\left(X_{t},y\right)\mu(\dif y)\dif t+\int_{\mR^d}\s\left(X_{t},y\right)\mu(\dif y)\dif W_t,\ \ X_0=\xi,
\deq
where $\mu$ be a probability measure on $\mR^d$, the initial value $X_0=\xi$ is a $\nu$-distributed, and $\cF_0$-measurable $\mR^d$-valued random variable  and the coefficients
$$
b:\mR^d\times \mR^d\rightarrow\mR^d,\ \  \s:\mR^d\times \mR^d\rightarrow\mR^d\otimes\mR^d
$$
are Borel measurable functions.

Mckean-Vlasov SDEs were  initiated by Mckean \cite{M} using Kac's formalism of molecular chaos \cite{K}, and have attracted  substantial research interests(cf.~\cite{S}). R$\mathrm{\ddot{o}}$ckner and Zhang \cite{RZh} showed the strong well-posedness of the above SDE under a singular distribution dependent drift and a constant matrix $\s$. Recently, Mishura and Veretennikov \cite{MV} established weak and strong existence and uniqueness results for multi-dimensional stochastic McKean-Vlasov equations if  $b$ and $\s$ are of linear growth in $x$ and the diffusion matrix $\s$ is uniformly non-degenerate. Up to now, there are many works devoted to the study of Mckean-Vlasov SDEs(see \cite{d,HW1,LM} and references therein). Moreover, the non-degenerate assumption on $\s$ is usually required when $\s$ is non-constant.

We note that if
$$
\bar{b}(x,\mu):=\int_{\mR^d}b(x,y)\mu(\dif y),\ \ \bar{\s}(x,\mu):=\int_{\mR^d}\s(x,y)\mu(\dif y),
$$
then Eq.(\ref{a-1}) can be rewritten as
\beq\label{1-1}
\dif X_{t}=\bar{b}\left(X_{t},\mu_{t}\right)\dif t+\bar{\s}\left(X_{t},\mu_{t}\right)\dif W_t,\ \ X_0=\xi,
\deq
where $\mu_{t}$ is the law of $X_t$, and
$$
\bar{b}:\mR^d\times \cP_p(\mR^d)\rightarrow\mR^d, \ \ \bar{\sigma}:\mR^d\times \cP_p(\mR^d)\rightarrow\mR^d\otimes\mR^d
$$
are Borel measurable functions. Here let $\cP_p(\mR^d)$ denote a space of probability measures on $\mR^d$ with $p$-th moment($p\geq 1$), that is, $\mu(|\cdot|^p):=\int_{\mR^d}|x|^p\mu({\dif x})<\infty$. What's more, Eq.(\ref{1-1}) is also called mean-field SDE or distribution dependent SDE in the literature, which naturally appears in the studies of interacting systems and mean-field games (e.g. \cite{CD,CD1,CGPS,K,M,S,V} ).

 Meanwhile, numerous results are developed under irregular
 conditions, based on Eq.(\ref{1-1}). Wang \cite{W} studied  strong well-posedness of   distribution dependent SDEs with one-sided Lipschitz continuous drift $\bar{b}$ and Lipschitz-continuous diffusion $\bar{\s}$.
For  additive noise case, existence and uniqueness of strong solutions with irregular drift were established in \cite{BMP}. Huang and Wang \cite{HW} proved the existence and uniqueness of distribution dependent SDEs with non-degenerate noise, under integrability conditions on distribution dependent coefficients. Ding and Qiao \cite{DQ}  obtained the strong well-posedness  under non-Lipschitz coefficients  by weak existence and pathwise
uniqueness.
In \cite{MBKM}, the unique strong
solution was constructed by effective approximation procedures, without using the famous
Yamada-Watanabe theorem.  So far, related topics for Eq.(\ref{1-1}) have been considerably investigated, such as Harnack inequality \cite{HW,W}, ergodicity \cite{EGZ,RRW}, and Feynman-Kac formulae \cite{BLPR,CM,RRW}  and so on.
In general, the model of (\ref{a-1}) has comparatively more widespread applications than (\ref{1-1}), which don't require the coefficients depend on
the law of the solution, such as $b(x-y)$.

In the present paper, we make the following non-Lipschitz assumptions without non-degeneracy diffusion:\\
$\mathbf{(H_1)}$ For any $x,y\in\mR^d$,
and for some $c_0>0$,
$$
|b(x,y)|+\|\s(x,y)\|\leq c_0(1+|x|+|y|).
$$
$\mathbf{(H_2)}$ The functions $b,\s$ are Borel continuous measurable functions in $\mR^d\times\mR^d$. For all $ x,y\in\mR^d$, there are constants $\l_1,\l_2>0$ such that
$$
|b(x_1,y_1)-b(x_2,y_2)|\leq \l_1\left(|x_1-x_2|\gamma_1(|x_1-x_2|)+|y_1-y_2|\gamma_1(|y_1-y_2|)\right),
$$
$$
\|\s(x_1,y_1)-\s(x_2,y_2)\|^2\leq \l_2\left(|x_1-x_2|^2\gamma_2(|x_1-x_2|)+|y_1-y_2|^2\gamma_2(|y_1-y_2|)\right),
$$
where $\gamma_i$ is a positive continuous  function on $\mR^+$, bounded  on $[1,\infty)$ and satisfying
\beq\label{1-7}
\lim_{x\downarrow0}\frac{\gamma_i(x)}{\log(x^{-1})}=\d_i<\infty, i=1,2.
\deq

Throughout this paper, we assume $\mE|\xi|^2<\infty$ and all relevant stochastic differential equations admit unique solutions.
One of the main results of this paper is stated as follows.

\begin{thm}\label{thm-a}Assume $\mathbf{(H_1)}$ and $\mathbf{(H_2)}$. For any $T>0$, (\ref{a-1}) has a unique strong solution $(X_t)_{t\geq0}$ such that
\beq\label{1-a}
\mE\left[\sup_{t\in[0,T]}|X_t|^2\right]\leq C(1+\mE|\xi|^2),
\deq
where $C$ is a constant depending on $T$ and $c_0$.
\end{thm}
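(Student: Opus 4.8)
The plan is to establish existence and uniqueness via a Picard-type fixed point argument on the space of continuous flows of measures, combined with a priori moment estimates. First I would set up the iteration: starting from $\mu^{(0)}_t = \Law(\xi)$ (constant in $t$), define $X^{(n+1)}$ as the solution of the \emph{classical} SDE $\dif X^{(n+1)}_t = \bar b(X^{(n+1)}_t,\mu^{(n)}_t)\dif t + \bar\sigma(X^{(n+1)}_t,\mu^{(n)}_t)\dif W_t$ with $X^{(n+1)}_0 = \xi$, and set $\mu^{(n+1)}_t = \Law(X^{(n+1)}_t)$. Hypothesis $\mathbf{(H_1)}$ gives, through $|\bar b(x,\mu)| + \|\bar\sigma(x,\mu)\| \leq c_0(1 + |x| + \mu(|\cdot|))$, the linear growth needed to invoke classical well-posedness of each Picard SDE (which the paper assumes available) and, via Burkholder--Davis--Gundy and Gronwall, the uniform moment bound $\mE[\sup_{t\le T}|X^{(n)}_t|^2] \leq C(1+\mE|\xi|^2)$ with $C = C(T,c_0)$ independent of $n$. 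This bound is exactly what will persist to the limit and give \eqref{1-a}.

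Next I would prove the iteration is Cauchy. Writing $Z^{n}_t = X^{(n+1)}_t - X^{(n)}_t$ and $\phi_n(t) = \mE[\sup_{s\le t}|Z^n_s|^2]$, apply Itô's formula to $|Z^n_t|^2$ and BDG to the martingale part; the drift and diffusion differences are controlled using $\mathbf{(H_2)}$, where the key point is that for a concave-type modulus $u\mapsto u^2\gamma_i(u)$ one has, after taking expectations and using Jensen, a bound of the form
\[
\phi_n(t) \;\le\; C\int_0^t \kappa\bigl(\phi_n(s)\bigr)\,\dif s \;+\; C\int_0^t \kappa\bigl(W_2(\mu^{(n)}_s,\mu^{(n-1)}_s)^2\bigr)\,\dif s,
\]
where $\kappa(u) = u\log(u^{-1})$ near $0$ (coming from \eqref{1-7}) and $W_2(\mu^{(n)}_s,\mu^{(n-1)}_s)^2 \le \phi_{n-1}(s)$. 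Since $\int_{0+} \dif u/\kappa(u) = \infty$, an Osgood/Bihari-type argument shows $\sup_n \phi_n(t) \to 0$ appropriately and yields a contraction on a small time interval, which is then extended to $[0,T]$ by iterating over subintervals because the constants depend only on $T$ and $c_0$; this produces a limit process $X$ with $\mu_t = \Law(X_t)$, and passing to the limit in the SDE (using continuity of $b,\sigma$ from $\mathbf{(H_2)}$ and dominated convergence justified by the uniform moment bound) shows $X$ solves \eqref{a-1}.

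For uniqueness, suppose $X$ and $\tilde X$ are two solutions with laws $\mu_t,\tilde\mu_t$; the same Itô/BDG estimate gives, for $\psi(t) = \mE[\sup_{s\le t}|X_s - \tilde X_s|^2]$, the inequality $\psi(t) \le C\int_0^t \kappa(\psi(s))\,\dif s$ (absorbing $W_2(\mu_s,\tilde\mu_s)^2 \le \psi(s)$ into the same term), and Bihari's inequality forces $\psi \equiv 0$. Finally \eqref{1-a} follows from the uniform-in-$n$ moment bound of the first step, which is inherited by $X$ via Fatou's lemma.

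I expect the main obstacle to be the handling of the non-Lipschitz modulus: one must verify that the map $u \mapsto u^2\gamma_i(u)$ is (or is dominated by) a concave function so that Jensen's inequality can be applied to pull expectations and the $W_2$-distance through it, and then check that the resulting $\kappa$ satisfies the Osgood condition $\int_{0+}\dif u/\kappa(u) = \infty$ uniformly enough that the time-localization constants do not degenerate — this is where \eqref{1-7} and the boundedness of $\gamma_i$ on $[1,\infty)$ are essential, and where care is needed because $u\log(u^{-1})$ must be extended to a globally defined, monotone, concave majorant on all of $[0,\infty)$.
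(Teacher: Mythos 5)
Your proposal follows essentially the same route as the paper's proof: a Picard iteration in which the law is frozen at the previous step, uniform second-moment bounds via BDG and Gronwall, an It\^o/BDG estimate for the differences in which $u\gamma_i(u)$ and $u^2\gamma_i(u)$ are dominated by the concave majorant $\rho_\eta$ and the Wasserstein terms are controlled by $\mW_2(\mu^{(n)}_s,\mu^{(n-1)}_s)^2\le\phi_{n-1}(s)$, followed by a Bihari-type lemma (the paper's Lemma \ref{lem-1}) for both the Cauchy property and uniqueness, with (\ref{1-a}) recovered from the uniform bound. The one caveat is your phrase ``contraction on a small time interval'': with the modulus $u\log(u^{-1})$ no genuine contraction holds on any interval (the map $\phi_{n-1}\mapsto\phi_n$ does not shrink small values), so convergence of the iterates must come directly from the Bihari/Osgood comparison — iterating Lemma \ref{lem-1} as the paper does, or running the Osgood argument on the $\limsup$ of pairwise differences — which is what your ``Osgood/Bihari-type argument'' already points to, and is the same (somewhat delicate) step the paper itself treats briefly.
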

\begin{rem}In this paper, we state the existence of solutions to (\ref{a-1}) using the standard
method-Picard successive approximation. Meanwhile, there are other ways to show the existence of solutions to (\ref{a-1}), by the well-known result for bounded measurable drift $b$ obtained in \cite{MV} and the estimate of uniformity in \cite{XXZZ}, which is established by Krylov's estimate and Zvonkin's technique, and whose way is obtained under non-degenerate coefficient.
Thereby, for degenerate diffusion terms, the strong well-posedness of Mckean-Vlasov SDEs is established with non-Lipschitz coefficients, in this article.
\end{rem}
\begin{rem}
We also mention our conditions. Theorem~\ref{thm-a} can not be covered by  the recent work of \cite{DQ} in which the distribution dependent coefficients are non-Lipschitz continuous in the spatial variables  and  Lipschitz continuous in law under the Wasserstein metric.   However,  in this paper, if $b(x,y)$ and $\s(x,y)$ are Lipschitz continuous in $y$, it implies the conditions of the corresponding distribution dependent coefficients in \cite{DQ}. 
  That is, the  conclusion improves  the result of \cite[Theorem~3.1]{DQ}.
\end{rem}

Suppose that the sequence of $\{\xi_i,i\in\mN\}$ is independent identically distributed with a common distribution $\nu$ in $\mR^d$ and $\{W^i,i\in\mN\}$ is a sequence of independent $d$-dimensional Brownian motions. Consider
the following non-interacting particle systems:
\beq\label{1-3}
\dif X_{t}^{i}=\int_{\mR^d}b(X_{t}^{i},y)\mu_{X_{t}^{i}}(\dif y)\dif t+\int_{\mR^d}\s(X_{t}^{i},y)\mu_{X_{t}^{i}}(\dif y)\dif W_t^i, \ \ X_{0}^i=\xi_i,
\deq
where $\mu_{X_{t}^i}$ stands for the law of $X_{t}^i$, and $\{X_{\cdot}^i,i\in\mN\}$ is a set of independent identically distributed stochastic processes with the common distribution $\mu_{\cdot}$.

Next, We use the  discretized $N$-interacting particle to approximate the above non-interacting particle. Namely, consider $N$-interacting particle $X^{N,i}$ satisfying
\beq\label{a-9}
\dif X_{t}^{N,i}=\bar{b}\left(X_{t}^{N,i},\mu_{t}^N\right)\dif t+\bar{\s}\left(X_{t}^{N,i},\mu_{t}^N\right)\dif W_t^i,\ \ X_0^{N,i}=\xi_i,
\deq
where $\mu_t^N$ is the empirical measure of $\left\{X_t^{N,i},i=1,2,\cdots,N\right\}$ defined by
$$
\mu_t^N:=\frac{1}{N}\sum_{j=1}^N\d_{X_t^{N,j}},
$$
where $\d_x$ stands for a Dirac measure at point $x$.
Thus, for $i=1,2,\cdots, N$ and for any $t\geq 0$, (\ref{a-9}) is equivalent to
\beq\label{1-2}
\dif X_t^{N,i}=\frac{1}{N}\sum_{j=1}^Nb\left(X_{t}^{N,i},X_{t}^{N,j}\right)\dif t+\frac{1}{N}
\sum_{j=1}^N\s\left(X_{t}^{N,i},X_{t}^{N,j}\right)\dif W_t^i,\ \ X_0^{N,i}=\xi_i.
\deq

The so-called propagation of chaos is that the
law of fixed particles $X_t^{N,i}$ tends to the distribution of   independent particles $X_{t}^{i}$
solving (\ref{1-3}) with same law when  $N$  goes to $+\infty$ .
The study of chaos have already profound impacts on theoretical and applied contexts. In \cite{JW}, there are more developments with an eye toward models arising in the physical sciences. By the mean field game theory, \cite{CST,HMC} have inspired new theoretical developments and applications in engineering and economics.

The propagation of chaos for the interacting particle of Mckean-Vlasov SDEs has been revealed under Lipschitz assumptions, see \cite{S}. We mention that new particle representations  for ergodic McKean-Vlasov SDEs were introduced in \cite{ABRS}. Propagation of chaos and convergence rate of Euler's approximation were established by Bao and Huang \cite{BH}, where the coefficients are H\"older continuous in $x$ and Lipschitz continuous in $\mu$.
Especially in the case of nondegenerate  diffusion terms, Zhang \cite{Zh2} also showed the propagation of chaos for Euler's approximation with the linear growth coefficients. Rencently, Dos Reis, Smith and Tankov \cite{dST} took some examples in many different fields with Lipschitz coefficients.

The following results state the propagation of chaos for stochastic interacting particle systems. In other words,
the continuous time Euler's scheme of stochastic $N$-interacting particle systems converges strongly to non-interacting particle systems.
\begin{thm}\label{thm-1}Suppose that $\mathbf{(H_1)}$ and  $\mathbf{(H_2)}$ hold. Then for any $T>0$,
\beq\label{1-4}
\lim_{N\rightarrow\infty}\sup_{i=1,2,\cdots,N}\mE\left(\sup_{ t\in[0,T]}\left|X_t^{N,i}-X_t^i\right|^2\right)=0.
\deq
In particular,
$$
\sup_{i=1,2,\cdots,N}N\mE\left[\sup_{t\leq T}\left|X_t^{N,i}-X_t^i\right|^2\right]<\infty.
$$
\end{thm}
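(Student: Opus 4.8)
The plan is to prove the propagation of chaos estimate \eqref{1-4} by a Gronwall-type argument applied to the error process $e_t^i := X_t^{N,i} - X_t^i$, combined with a quantitative law of large numbers for empirical measures. First I would set up an intermediate system of i.i.d.\ copies: fix the same Brownian motions $W^i$ and initial data $\xi_i$ used in \eqref{1-2}, and let $X^i$ solve \eqref{1-3}. Writing out $X_t^{N,i}-X_t^i$ using \eqref{1-2} and the McKean--Vlasov equation for $X^i$, I split the drift difference into two pieces,
\beq\label{plan-split}
\frac{1}{N}\sum_{j=1}^N b(X_t^{N,i},X_t^{N,j}) - \int_{\mR^d} b(X_t^i,y)\,\mu_t(\dif y)
= \Big(\frac{1}{N}\sum_{j=1}^N b(X_t^{N,i},X_t^{N,j}) - \frac{1}{N}\sum_{j=1}^N b(X_t^i,X_t^j)\Big) + \Big(\frac{1}{N}\sum_{j=1}^N b(X_t^i,X_t^j) - \int b(X_t^i,y)\,\mu_t(\dif y)\Big),
\deq
and similarly for $\s$. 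The first bracket is controlled pathwise by $\mathbf{(H_2)}$ in terms of $|e_t^i|$ and $\frac1N\sum_j|e_t^j|$; the second bracket is a genuine fluctuation term that vanishes as $N\to\infty$ by the independence of the $X^j$.

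Next I would estimate $\mE\big[\sup_{s\le t}|e_s^i|^2\big]$. Applying It\^o's formula (or the Burkholder--Davis--Gundy inequality to the martingale part), using $\mathbf{(H_1)}$ for the linear growth and the a priori moment bound \eqref{1-a} (which gives $\sup_N\sup_i\mE[\sup_{t\le T}|X_t^{N,i}|^2]<\infty$, proved exactly as in Theorem~\ref{thm-a} since \eqref{1-2} has the same structure), I arrive at an inequality of the schematic form
\beq\label{plan-gron}
g_i(t) \leq C\int_0^t \mE\big[\varphi\big(|e_s^i|^2\big)\big]\,\dif s + C\int_0^t \frac1N\sum_{j=1}^N \mE\big[\varphi\big(|e_s^j|^2\big)\big]\,\dif s + C\,\Lambda_N(T),
\deq
where $g_i(t)=\mE[\sup_{s\le t}|e_s^i|^2]$, $\varphi(r)=r(\gamma_1(\sqrt r)+\gamma_2(\sqrt r))$ is concave near $0$ (this is where \eqref{1-7}, i.e.\ the $\log$-type modulus, is used — exactly as in the proof of Theorem~\ref{thm-a}), and $\Lambda_N(T)$ bounds the mean-square size of the fluctuation terms. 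Averaging \eqref{plan-gron} over $i=1,\dots,N$ and setting $\bar g(t)=\frac1N\sum_i g_i(t)$, Jensen's inequality for the concave $\varphi$ gives $\bar g(t)\le C\int_0^t \varphi(\bar g(s))\,\dif s + C\Lambda_N(T)$, and an Osgood/Bihari-type comparison lemma (the same one behind pathwise uniqueness in Theorem~\ref{thm-a}) yields $\bar g(T)\to 0$ provided $\Lambda_N(T)\to 0$; feeding this back into \eqref{plan-gron} for a single $i$ then gives the uniform-in-$i$ conclusion \eqref{1-4}.

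The main obstacle is controlling the fluctuation term $\Lambda_N(T)$, i.e.\ showing
\beq\label{plan-fluc}
\sup_{i}\,\mE\Big[\sup_{t\le T}\Big|\frac1N\sum_{j=1}^N b(X_t^i,X_t^j) - \int_{\mR^d} b(X_t^i,y)\,\mu_t(\dif y)\Big|^2\Big] \longrightarrow 0,
\deq
and likewise for $\s$. For fixed $t$ and fixed $i$, conditioning on $X_t^i$ the summands $b(X_t^i,X_t^j)$, $j\neq i$, are i.i.d.\ with mean $\int b(X_t^i,y)\mu_t(\dif y)$, so the conditional variance is $O(1/N)$ by $\mathbf{(H_1)}$ and \eqref{1-a}; the $j=i$ term contributes $O(1/N^2)$. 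The supremum over $t\le T$ is handled either by the continuity of $t\mapsto \mu_t$ and a tightness/uniform-integrability argument, or — more robustly — by noting that $b$ is continuous and of linear growth and the processes are continuous, so one can pass the supremum inside via a Doob/BDG estimate on the time-integrated version actually appearing in the SDE (one only ever needs $\int_0^t$ of these terms, which smooths the time dependence). Assembling this gives $\Lambda_N(T)=O(1/N)$, which simultaneously proves \eqref{1-4} and the sharper rate $\sup_i N\,\mE[\sup_{t\le T}|X_t^{N,i}-X_t^i|^2]<\infty$ claimed in the ``in particular'' part — with the caveat that when $\varphi$ is genuinely nonlinear the Bihari comparison only gives the $O(1/N)$ rate if $\int_{0^+}\dif r/\varphi(r)=\infty$ degrades gracefully, which under the $\log$-modulus \eqref{1-7} it does up to a harmless logarithmic factor, and here the linear-growth structure of the second bracket in fact lets one keep the clean $1/N$ bound.
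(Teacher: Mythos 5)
Your proposal follows essentially the same route as the paper's proof: the same synchronous coupling of \eqref{1-2} with \eqref{1-3}, the same splitting of the drift and diffusion differences into a part controlled by $\mathbf{(H_2)}$ plus the centered fluctuation terms $\frac1N\sum_{j}\tilde b(X^i,X^j)$, $\frac1N\sum_{j}\tilde\s(X^i,X^j)$, the same conditional independence/orthogonality argument giving an $O(1/N)$ bound for these, and the same Bihari-type comparison (Lemma \ref{lem-1}) to close the estimate. Your closing caveat about whether the logarithmic modulus really preserves the clean $1/N$ rate is well taken, since the paper itself simply invokes Lemma \ref{lem-1} and asserts the bound $C/N$ (whereas the lemma as stated yields $(C/N)^{\exp(-CT)}$), but this is a feature of the paper's own argument rather than a divergence in approach.
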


Another goal of this paper is to get the corresponding overall convergence rate. To discretize (\ref{1-2}) in time, for fixed $h\in(0,1)$, the corresponding Euler's scheme is
\beq\label{1-2'}
\dif X_t^{h,N,i}=\frac{1}{N}\sum_{j=1}^Nb\left(X_{t_h}^{h,N,i},X_{t_h}^{h,N,j}\right)\dif t+\frac{1}{N}
\sum_{j=1}^N\s\left(X_{t_h}^{h,N,i},X_{t_h}^{h,N,j}\right)\dif W_t^i,\ \ X_0^{h,N,i}=\xi_i.
\deq

\begin{thm}\label{thm-2}Under the assumptions of Theorem \ref{thm-1}, for $h\in(0,1)$ sufficiently small and $\a\in(0,\frac{1}{2})$, there exist a constant $C$ independent of $h$ and $N$ such that
\beq\label{1-4}
\sup_{i=1,2,\cdots,N}\mE\left(\sup_{ t\in[0,T]}\left|X_t^{h,N,i}-X_t^i\right|^2\right)< C\left(\frac{1}{N}+h^{2\alpha}\right).
\deq

\end{thm}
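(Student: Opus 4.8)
The plan is to compare $X^{h,N,i}$ with $X^i$ through the continuous-time interacting system $X^{N,i}$ of \eqref{1-2}, writing
\[
X_t^{h,N,i}-X_t^i=\big(X_t^{h,N,i}-X_t^{N,i}\big)+\big(X_t^{N,i}-X_t^i\big).
\]
By Theorem~\ref{thm-1} the second summand already satisfies $\sup_i\mE\big[\sup_{t\leq T}|X_t^{N,i}-X_t^i|^2\big]\leq C/N$, so it remains to control the pure time-discretization error $e_t^i:=X_t^{h,N,i}-X_t^{N,i}$ and to prove $\sup_i\mE\big[\sup_{t\leq T}|e_t^i|^2\big]\leq Ch^{2\a}$ for every $\a\in(0,\tfrac12)$.

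First I would record the standard a~priori estimates. Because the coefficients of \eqref{1-2} and \eqref{1-2'} are of linear growth in the empirical-mean sense, $\mathbf{(H_1)}$ together with Burkholder--Davis--Gundy and Gronwall's inequality yields $\sup_{h,N,i}\mE[\sup_{t\leq T}|X_t^{h,N,i}|^2]<\infty$ and $\sup_{N,i}\mE[\sup_{t\leq T}|X_t^{N,i}|^2]<\infty$, together with the one-step bound
\[
\mE|X_s^{h,N,i}-X_{s_h}^{h,N,i}|^2+\mE|X_s^{N,i}-X_{s_h}^{N,i}|^2\leq Ch,\qquad s\in[0,T],
\]
where $s_h:=\lfloor s/h\rfloor h$ denotes the left grid point. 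By exchangeability in $i$, the law of $e^i_\cdot$ is independent of $i$; put $v(t):=\sup_{s\leq t}\mE|e_s^i|^2$.

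Next I would apply It\^o's formula to $|e_t^i|^2$ and, inside the drift and diffusion integrals, split each increment $b(X_{s_h}^{h,N,i},X_{s_h}^{h,N,j})-b(X_s^{N,i},X_s^{N,j})$ into an \emph{error part} $b(X_{s_h}^{h,N,i},X_{s_h}^{h,N,j})-b(X_{s_h}^{N,i},X_{s_h}^{N,j})$ and a \emph{consistency part} $b(X_{s_h}^{N,i},X_{s_h}^{N,j})-b(X_s^{N,i},X_s^{N,j})$ (and similarly for $\s$). By $\mathbf{(H_2)}$ the error part is dominated by $\l_1(|e_{s_h}^i|\gamma_1(|e_{s_h}^i|)+|e_{s_h}^j|\gamma_1(|e_{s_h}^j|))$ and the consistency part by $\l_1(|X_{s_h}^{N,i}-X_s^{N,i}|\gamma_1(|X_{s_h}^{N,i}-X_s^{N,i}|)+\cdots)$. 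Condition \eqref{1-7} makes $r\mapsto r\gamma_i(\sqrt r)$ comparable near $0$ to $r\log(1/r)$, so its least concave majorant $\phi_i$ is Bihari-admissible, i.e.\ $\int_{0+}\dif u/\phi_i(u)=\infty$; combined with the one-step bound, Cauchy--Schwarz and Jensen's inequality, the consistency part contributes at most $C\phi_i(Ch)+C\int_0^tv(s)\,\dif s$, and $\phi_i(Ch)\leq Ch\log(1/h)\leq C_\a h^{2\a}$ for every $\a<\tfrac12$. For the error part, at each place where it is paired with the factor $e_s^i$ I would write $e_s^i=e_{s_h}^i+(e_s^i-e_{s_h}^i)$: the $e_{s_h}^i$-piece produces, after Jensen's inequality, the concavity of $\phi_i$ and a Cauchy--Schwarz rearrangement of the inter-particle terms, a quantity $\leq C\rho(v(s))$, where $\rho$ is a concave function with $\rho(u)\sim u\log(1/u)$ as $u\downarrow0$, $\rho(u)\geq u$, and $\int_{0+}\dif u/\rho(u)=\infty$; the remaining piece involving the increment $e_s^i-e_{s_h}^i$ is dealt with as indicated below. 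Collecting terms one arrives at
\[
v(t)\leq C_\a h^{2\a}+C\int_0^t\rho(v(s))\,\dif s,
\]
and Bihari's inequality gives $v(T)\leq Ch^{2\a}$. A final bootstrap — running It\^o's formula once more, now with the smallness of $\mE|e_s^i|^2$ available, and estimating the martingale part by Burkholder--Davis--Gundy — upgrades this to $\mE[\sup_{t\leq T}|e_t^i|^2]\leq Ch^{2\a}$, which together with the first step yields \eqref{1-4}.

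The main obstacle is precisely the cross term between the Euler increment $e_s^i-e_{s_h}^i$ and the modulus of the error, i.e.\ $\mE\big[|e_s^i-e_{s_h}^i|\,|e_{s_h}^i|\gamma_1(|e_{s_h}^i|)\big]$: a naive Cauchy--Schwarz bound gives only $\sqrt h$ times $\big(\mE[|e_{s_h}^i|^2\gamma_1(|e_{s_h}^i|)^2]\big)^{1/2}$, and the modulus $u\mapsto u(\log(1/u))^2$ controlling the latter fails the Osgood condition, so Bihari cannot be applied to it. I would resolve this by using that the error bracket $b(X_{s_h}^{h,N,\cdot},X_{s_h}^{h,N,\cdot})-b(X_{s_h}^{N,\cdot},X_{s_h}^{N,\cdot})$ is $\cF_{s_h}$-measurable while the diffusive part of $e_s^i-e_{s_h}^i$ has zero conditional mean given $\cF_{s_h}$; after conditioning, only the drift part of $e_s^i-e_{s_h}^i$ survives in this cross term, and that part is of order $h$ (not $\sqrt h$) in $L^2$, hence can be estimated crudely without spoiling the Osgood structure. (Alternatively, one may keep the cross term and absorb it by Young's inequality with an $h$-dependent weight of size $h^{1-\a}$, at the cost of a $\log$-loss in the rate, which is still harmless for any $\a<\tfrac12$.) Carrying out this bookkeeping carefully — always peeling off, at each appearance of the error, a genuine linear factor to be matched against the log-type modulus $\gamma_i$ — is the bulk of the technical work.
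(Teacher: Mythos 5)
Your proposal follows essentially the same route as the paper: decompose $X^{h,N,i}-X^i$ through the continuous-time interacting system $X^{N,i}$ of \eqref{1-2}, invoke Theorem \ref{thm-1} for the $C/N$ part, and bound $X^{h,N,i}-X^{N,i}$ by $Ch^{2\alpha}$ via It\^o's formula, the one-step increment estimate of Lemma \ref{lem-3}, the log-modulus bounds \eqref{a-2}--\eqref{a-4} and Bihari's Lemma \ref{lem-1}, which is exactly the content of the paper's Lemma \ref{lem-4}. The only notable difference is that the paper centers the ``error part'' at the current time $s$ (comparing $X_s^{N,\cdot}$ with $X_s^{h,N,\cdot}$) rather than at the grid point $s_h$, so the cross term with the increment $e_s^i-e_{s_h}^i$ that occupies your final paragraph never arises there; your conditioning argument for it is nonetheless sound.
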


This paper is organized as follows. In Section 2, we show the main lemmas  for later use. Section 3 yields the strong wellposedness of (\ref{a-1}). In section 4, we construct the propagation of chaos and  the corresponding overall convergence rate.

Throughout the paper, $C$ with or without
indices will denote different positive constants (depending on the indices), whose values may change from one place to another and not important.
\section{Preliminary}
First, We introduce a well-studied metric on the space of
distributions known as the Wasserstein distance which allows us to consider $\cP_p(\mR^d)$ as a metric space.

For $\mu,\nu\in\cP_p(\mR^d)$, the $\mW_p$-Wasserstein distance between $\mu$ and $\nu$ is defined by
\beq\label{2-a}
\mW_p(\mu,\nu)=\inf_{\pi\in\cC(\mu,\nu)}\left(\int_{\mR^d\times \mR^d}|x-y|^p\pi(\dif x,\dif y)\right)^{\frac{1}{1\vee p}},
\deq
where $\cC(\mu,\nu)$ is the set of all couplings of $\mu$ and $\nu$ on $\cP(\mR^d\times \mR^d)$, such that $\pi\in\cC(\mu,\nu)$ if and only if $\pi(\cdot,\mR^d)=\mu(\cdot)$ and $\pi(\mR^d,\cdot)=\nu(\cdot)$.

The topology induced by Wasserstein metric  coincides with the topology of weak convergence of measure together with
the convergence of all moments of order up to $p$ in \cite{D}.

For later use, for  $0<\eta<1/e$, we define a  strictly increasing, and concave function:
$$
\rho_{\eta}(x)=
\left\{
\begin{array}{ll}
x\log x^{-1},~~~&0<x\leq\eta, \\
(\log \eta^{-1}-1)x+\eta,~~~&x>\eta.\\
\end{array}
\right.
$$

The generalization of Gronwall-Bellman type inequality has been verified by Bihari \cite{B}. It can be  also found in \cite{Zh1}.
\begin{lem}\label{lem-1} Let $g(s), q(s)$ be two strictly positive functions on $\mR_+$ satisfying $g(0)<\eta$ and
$$
g(t)\leq g(0)+\int_0^tq(s)\rho_{\eta}(g(s))\dif s,\ \ t\geq 0.
$$
Then
$$
g(t)\leq g(0)^{exp\{-\int_0^tq(s)\dif s\}}.
$$
\end{lem}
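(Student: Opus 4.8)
The plan is to prove the Bihari-type Gronwall inequality (Lemma~\ref{lem-1}) by reducing it to a standard comparison argument for ODEs via a change of variables through the primitive of $1/\rho_\eta$. First I would set $u(t) := g(0) + \int_0^t q(s)\rho_\eta(g(s))\,\dif s$, so that $u$ is absolutely continuous, $u(0) = g(0) < \eta$, $g(t) \leq u(t)$, and $u'(t) = q(s)\rho_\eta(g(t))$ a.e. Since $\rho_\eta$ is increasing and $g \leq u$, monotonicity gives $u'(t) \leq q(t)\rho_\eta(u(t))$. I also need $u(t) > 0$: this follows since $g(0) > 0$ and the integrand is nonnegative (here one uses that $\rho_\eta > 0$ on $(0,\infty)$ and $q > 0$).

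The next step is to introduce $G(x) := \int_x^{x_0} \frac{\dif r}{\rho_\eta(r)}$ for a fixed reference point $x_0 \in (0,\eta)$; near $0$ we have $\rho_\eta(r) = r\log r^{-1}$, hence $\int_0^{x_0} \frac{\dif r}{r\log r^{-1}} = +\infty$, so $G$ maps $(0,x_0]$ onto $[0,+\infty)$, is strictly decreasing, and is a $C^1$ bijection onto its range. The key computation is $\frac{\dif}{\dif t} G(u(t)) = -\frac{u'(t)}{\rho_\eta(u(t))} \geq -q(t)$, which upon integrating from $0$ to $t$ yields $G(u(t)) \geq G(u(0)) - \int_0^t q(s)\,\dif s$, i.e. $G(u(t)) \geq G(g(0)) - \int_0^t q(s)\,\dif s$. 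Applying the decreasing bijection $G^{-1}$ reverses the inequality: $u(t) \leq G^{-1}\!\left(G(g(0)) - \int_0^t q(s)\,\dif s\right)$.

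Finally I would identify $G^{-1}$ explicitly in terms of the logarithm. With $x_0 \in (0,\eta)$ and for $x \in (0,x_0]$, writing $r = e^{-v}$ gives $\int_x^{x_0}\frac{\dif r}{r\log r^{-1}} = \log(\log x^{-1}) - \log(\log x_0^{-1})$, so $G(x) = \log\!\big(\log x^{-1}/\log x_0^{-1}\big)$; inverting, $G^{-1}(y) = \exp\!\big(-\log x_0^{-1} \cdot e^{y}\big) = x_0^{\,e^{y}}$. Substituting $y = G(g(0)) - \int_0^t q\,\dif s$ and using $x_0^{e^{G(g(0))}} = g(0)$ (valid as long as $g(0) \leq x_0$; one then lets $x_0 \uparrow \eta$, or simply takes $x_0$ such that $g(0) \le x_0 < \eta$, which is possible since $g(0) < \eta$) gives $G^{-1}\!\big(G(g(0)) - \int_0^t q\,\dif s\big) = g(0)^{\exp\{\int_0^t q(s)\,\dif s\}}$...

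\medskip

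Wait — I should double-check the sign here, because the claimed conclusion has $g(0)^{\exp\{-\int_0^t q\}}$, with a minus sign. Re-examining: $G^{-1}$ is decreasing, $y = G(g(0)) - \int_0^t q$, so as $t$ grows $y$ decreases and $G^{-1}(y)$ increases — but $g(0) < 1$ means $g(0)^{c}$ with larger exponent $c$ is smaller, so we need the exponent to decrease, meaning... Let me recompute $G^{-1}$ directly from $G(x)=\log(\log x^{-1}) - \log(\log x_0^{-1})$: $e^{y} = \log x^{-1}/\log x_0^{-1}$, so $\log x^{-1} = e^y \log x_0^{-1}$, thus $x = \exp(-e^y \log x_0^{-1}) = x_0^{e^y}$. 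Then exponent $= e^y = e^{G(g(0))} e^{-\int_0^t q} = (\log g(0)^{-1}/\log x_0^{-1}) \, e^{-\int_0^t q}$, and $x_0^{e^y} = \exp(-e^y \log x_0^{-1}) = \exp(-\log g(0)^{-1} e^{-\int_0^t q}) = g(0)^{\exp\{-\int_0^t q(s)\,\dif s\}}$, as desired. Good — the reference-point dependence cancels cleanly.

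\medskip

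The main obstacle I anticipate is the regularity bookkeeping: justifying differentiation of $G(u(t))$ (composing the Lipschitz-on-compacts function $G$ with the absolutely continuous $u$), controlling that $u(t)$ stays in the domain where $\rho_\eta(r) = r\log r^{-1}$ — i.e. that $u(t) \leq \eta$ on a maximal interval, and arguing that if $u$ exits $(0,\eta]$ the bound still holds because on $[\eta,\infty)$ the function $\rho_\eta$ is affine with nonnegative values, so the inequality can only improve or be handled by a direct Gronwall estimate there. One clean way to sidestep the domain issue is to prove the bound first on the interval $\{t : u(t) \leq \eta\}$ and note that the right-hand side $g(0)^{\exp\{-\int_0^t q\}}$ is itself always $< \eta$ since $g(0) < \eta < 1$ and the exponent is positive, so $u(t) \leq g(0)^{\exp\{-\int_0^t q\}} < \eta$ propagates and the maximal interval is all of $\mathbb{R}_+$. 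Putting this together with $g(t) \leq u(t)$ gives the claim.
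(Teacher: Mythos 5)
The paper itself offers no proof of this lemma (it is quoted from Bihari \cite{B} and Zhang \cite{Zh1}), so your attempt has to be judged against the standard proof of Bihari's inequality --- which is exactly the route you take: majorize $g$ by $u$, use monotonicity of $\rho_\eta$, integrate the primitive $G$ of $1/\rho_\eta$, invert, and compute $G^{-1}$ explicitly on the branch where $\rho_\eta(r)=r\log r^{-1}$. That core computation, including the cancellation of the reference point $x_0$ and the sign check, is correct, and it does establish the asserted bound on the time interval where the right-hand side $g(0)^{\exp\{-\int_0^t q(s)\dif s\}}$ (equivalently $u(t)$) stays below $\eta$.

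The genuine gap is in your globalization step. You claim that $g(0)^{\exp\{-\int_0^t q(s)\dif s\}}<\eta$ for all $t$ because $g(0)<\eta<1$; this is false: as $\int_0^t q$ grows the exponent $\exp\{-\int_0^t q\}$ decreases to $0$, so the bound \emph{increases} towards $g(0)^0=1>1/e>\eta$, and it crosses $\eta$ at the finite time $t^*$ with $g(0)^{\exp\{-\int_0^{t^*} q\}}=\eta$ whenever $\int_0^\infty q$ is large enough. Hence the claimed propagation of ``$u(t)\leq\eta$'' collapses, and your fallback remark that on $[\eta,\infty)$ the affine branch ``can only improve'' the estimate also fails: for $x>\eta$ one has $\rho_\eta(x)=(\log\eta^{-1}-1)x+\eta> x\log x^{-1}$ (the difference vanishes at $x=\eta$ and has derivative $\log(x/\eta)>0$), so beyond $t^*$ the comparison solution of $v'=q\,\rho_\eta(v)$, $v(0)=g(0)$, grows strictly faster than $g(0)^{\exp\{-\int_0^t q\}}$; taking $g$ equal to this solution (equality in the hypothesis) shows the conclusion cannot hold for $t>t^*$ by any argument. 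In other words, your proof is complete only on $\{t: g(0)^{\exp\{-\int_0^t q(s)\dif s\}}\leq\eta\}$, which is in fact the correct range of validity of such Bihari-type statements (and the regime in which the paper applies the lemma, to quantities that are small); the step you use to pass to all $t\geq0$ is wrong and not repairable, so you should either add the standing restriction that the right-hand side remains below $\eta$ or state the conclusion up to the exit time $t^*$.
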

\begin{lem}\label{lem-2}Assume $\mathbf{(H_1)}$ and $\mathbf{(H_2)}$. Then there exists a constant $C$  independent of $N$, such that for all $T>0$, $N\in\mN$ and $i=1,2,3,\cdots,N$,
\beq\label{2-3}
\mE\Big[\sup_{t\in[0,T]}\big|X_t^{N,i}\big|^2\Big]\leq C(1+\mE|\xi_i|^2).
\deq
\end{lem}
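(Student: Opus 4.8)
The plan is to obtain \eqref{2-3} as an \emph{a priori} moment estimate: apply It\^o's formula to $|X^{N,i}_t|^2$, use only the linear growth bound $\mathbf{(H_1)}$, and close a Gronwall argument. The one point requiring care is that the constant must be independent of $N$. Since the particles in \eqref{a-9}--\eqref{1-2} interact only through the empirical measure $\mu^N_t$, and
$$
\int_{\mR^d}|y|^2\,\mu^N_t(\dif y)=\frac1N\sum_{j=1}^N|X^{N,j}_t|^2,
$$
a straightforward single-particle estimate will produce a coupling term of the form $\frac1N\sum_{j=1}^N\mE|X^{N,j}_s|^2$; this will be absorbed by first averaging the estimate over $i=1,\dots,N$. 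Concretely, from \eqref{a-9} and It\^o's formula,
\begin{equation*}
|X^{N,i}_t|^2=|\xi_i|^2+2\int_0^t\la X^{N,i}_s,\bar b(X^{N,i}_s,\mu^N_s)\ra\dif s+2\int_0^t\la X^{N,i}_s,\bar\s(X^{N,i}_s,\mu^N_s)\dif W^i_s\ra+\int_0^t\|\bar\s(X^{N,i}_s,\mu^N_s)\|^2\dif s,
\end{equation*}
and by Jensen's inequality together with $\mathbf{(H_1)}$,
\begin{equation*}
|\bar b(x,\mu^N_s)|^2+\|\bar\s(x,\mu^N_s)\|^2\le C\Big(1+|x|^2+\tfrac1N\textstyle\sum_{j=1}^N|X^{N,j}_s|^2\Big).
\end{equation*}

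Next, take $\sup_{s\le t}$, then expectations, and estimate the stochastic integral by the Burkholder--Davis--Gundy inequality followed by Young's inequality to absorb the resulting $\mE\sup_{s\le t}|X^{N,i}_s|^2$ contribution into the left-hand side; the drift term is handled by Cauchy--Schwarz and the growth bound above. Writing $\phi_i(t):=\mE\sup_{s\le t}|X^{N,i}_s|^2$ --- which is finite once we localize by $\tau_R:=\inf\{t\ge0:\max_{1\le j\le N}|X^{N,j}_t|\ge R\}$ and pass $R\to\infty$ via Fatou at the end --- one arrives at
\begin{equation*}
\phi_i(t)\le \mE|\xi_i|^2+C\int_0^t\Big(1+\phi_i(s)+\tfrac1N\textstyle\sum_{j=1}^N\phi_j(s)\Big)\dif s,\qquad t\in[0,T],
\end{equation*}
with $C=C(T,c_0)$ independent of $N$.

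Now set $\bar\phi(t):=\frac1N\sum_{i=1}^N\phi_i(t)$. Averaging the last inequality over $i$ and using that the $\xi_i$ are i.i.d., so $\frac1N\sum_{i=1}^N\mE|\xi_i|^2=\mE|\xi_1|^2$, gives
\begin{equation*}
\bar\phi(t)\le \mE|\xi_1|^2+C\int_0^t\big(1+2\bar\phi(s)\big)\dif s,
\end{equation*}
whence Gronwall's inequality yields $\bar\phi(T)\le C_T(1+\mE|\xi_1|^2)$ with $C_T$ independent of $N$. Substituting $\frac1N\sum_{j=1}^N\phi_j(s)=\bar\phi(s)\le\bar\phi(T)$ back into the inequality for $\phi_i$ and applying Gronwall once more produces $\phi_i(T)\le C(1+\mE|\xi_i|^2)$ (using again $\mE|\xi_i|^2=\mE|\xi_1|^2$), which after removing the localization is exactly \eqref{2-3}. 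The only genuinely delicate step is this double Gronwall: the naive single-particle bound cannot be closed directly because the interaction term $\frac1N\sum_j\phi_j(s)$ is not controlled by $\phi_i(s)$ alone, so one must average first and then feed the result back; the BDG estimate, the Young absorption, and the localization are all routine. Note that only $\mathbf{(H_1)}$ is used here, $\mathbf{(H_2)}$ entering merely through the standing assumption that \eqref{a-9} admits a (unique) solution.
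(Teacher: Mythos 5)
Your proof is correct and follows essentially the same route as the paper: It\^o's formula combined with the linear growth condition $\mathbf{(H_1)}$ and Jensen's inequality, BDG/Young to control the martingale part, and an average over the $N$ particles to make the constant independent of $N$ before applying Gronwall. The only difference is presentational: where the paper sums over $i$ and then invokes symmetry to ``remove the summation symbol,'' you run a double Gronwall argument (first bound the average $\bar\phi$, then feed it back into the inequality for each $\phi_i$), and you add an explicit localization via $\tau_R$ --- both are slightly more careful renderings of the same symmetry/averaging idea used in the paper.
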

\begin{proof}By It\^o's formula, H\"older's inequality and the linear growth of $b$ and $\s$, we have
$$
\aligned
&\mE\Big[\sup_{t\in[0,T]}\big|X_t^{N,i}\big|^2\Big]=\\
\leq& \mE|\xi_i|^2 + C\int_0^T\mE\Big[\sup_{r\in[0,s]}\Big(\left|X_r^{N,i}\right|^2+\frac{1}{N}\sum_{j=1}^N
\left|X_r^{N,j}\right|^2+1\Big)\Big]\dif s,\\
\endaligned
$$
where Jensen's inequality and the condition $\mathbf{(H_1)}$ are used in the last inequality.

Summing over $i$ on both sides, we have
$$
\aligned
&\sum_{i=1}^N\mE\Big[\sup_{t\in[0,T]}\big|X_t^{N,i}\big|^2\Big]\\
\leq&\sum_{i=1}^N \mE|\xi_i|^2+ C\int_0^T\Bigg[\mE\Big(\sup_{r\in[0,s]}\sum_{i=1}^N\left|X_r^{N,i}\right|^2\Big)\\
&+\mE\bigg(\sup_{r\in[0,s]}N\cdot\Big[\frac{1}{N}\sum_{j=1}^N
\left|X_r^{N,j}\right|^2+1\Big]\bigg)\Bigg]\dif s.\\
\endaligned
$$
By the symmetry, it holds that
$$
\sum_{i=1}^N\mE\Big[\sup_{t\in[0,T]}\big|X_t^{N,i}\big|^2\Big]\leq \sum_{i=1}^N\mE|\xi_i|^2 + C\int_0^T\mE\Big[\sup_{r\in[0,s]}\sum_{i=1}^N\left(2\left|X_r^{N,i}\right|^2+1\right)\Big]\dif s.
$$
Using symmetry, and removing the  summation symbol, then
$$
\mE\Big[\sup_{t\in[0,T]}\big|X_t^{N,i}\big|^2\Big]\leq \mE|\xi_i|^2 + C\int_0^T\mE\Big[\sup_{r\in[0,s]}\left(2\left|X_r^{N,i}\right|^2+1\right)\Big]\dif s.
$$
With the aid of Gronwall's inequality, we can obtain
$$
\mE\Big[\sup_{t\in[0,T]}\big|X_t^{N,i}\big|^2\Big]\leq C(1+\mE|\xi_i|^2),
$$
where the constant $C$ depends on $T, c_0$ but independent of $N$.
\end{proof}
\begin{cor}\label{cor-1}Under conditions $\mathbf{(H_1)}$ and $\mathbf{(H_2)}$, for all $T>0$, $N\in\mN$ and $i=1,2,3,\cdots,N$, there exists a constant $C$  independent of $N$ such that
\beq\label{2-2}
\mE\Big[\sup_{t\in[0,T]}\big|X_t^{h,N,i}\big|^2\Big]\leq C(1+\mE|\xi_i|^2).
\deq
\end{cor}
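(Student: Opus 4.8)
The plan is to repeat the argument of Lemma~\ref{lem-2} almost verbatim, the one new point being that the coefficients in the Euler scheme~\eqref{1-2'} are evaluated at the grid times $t_h$ rather than at $t$, which turns out to be harmless. Write
$$
\bar{b}\big(x,\mu_{s_h}^N\big)=\frac1N\sum_{j=1}^N b\big(x,X_{s_h}^{h,N,j}\big),\qquad
\bar{\s}\big(x,\mu_{s_h}^N\big)=\frac1N\sum_{j=1}^N \s\big(x,X_{s_h}^{h,N,j}\big),
$$
so that~\eqref{1-2'} reads $\dif X_t^{h,N,i}=\bar{b}(X_{t_h}^{h,N,i},\mu_{t_h}^N)\dif t+\bar{\s}(X_{t_h}^{h,N,i},\mu_{t_h}^N)\dif W_t^i$. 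On each interval $[kh,(k+1)h]$ this is a Brownian motion with $\cF_{kh}$-measurable constant coefficients, so an induction on $k$ gives $\mE[\sup_{t\in[0,T]}|X_t^{h,N,i}|^2]<\infty$ for fixed $h$ and $N$ (which legitimizes the absorption step below); the task is to make this bound uniform in $h$ and $N$.

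First I would apply It\^o's formula to $|X_t^{h,N,i}|^2$, take $\sup_{t\in[0,T]}$ and then expectations, estimate the martingale part by the Burkholder--Davis--Gundy inequality followed by Young's inequality (which produces a term $\tfrac12\mE[\sup_{t\in[0,T]}|X_t^{h,N,i}|^2]$ that is absorbed into the left-hand side), and bound the drift part by the elementary inequality $2\<x,v\>\le|x|^2+|v|^2$. For the coefficients I would then invoke Jensen's inequality and the linear growth hypothesis $\mathbf{(H_1)}$, for instance
$$
\big\|\bar{\s}\big(x,\mu_{r_h}^N\big)\big\|^2\le\frac1N\sum_{j=1}^N\big\|\s\big(x,X_{r_h}^{h,N,j}\big)\big\|^2
\le C\Big(1+|x|^2+\frac1N\sum_{j=1}^N\big|X_{r_h}^{h,N,j}\big|^2\Big),
$$
and the analogous estimate for $\bar{b}$. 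Since $r_h\le r$ we have $|X_{r_h}^{h,N,i}|\le\sup_{u\in[0,r]}|X_u^{h,N,i}|$, so every frozen argument is dominated by the running supremum; this is the only place where the time discretization enters, and it costs nothing. Combining the above yields
$$
\mE\Big[\sup_{t\in[0,T]}\big|X_t^{h,N,i}\big|^2\Big]\le C\big(1+\mE|\xi_i|^2\big)+C\int_0^T\mE\Big[\sup_{r\in[0,s]}\Big(\big|X_r^{h,N,i}\big|^2+\frac1N\sum_{j=1}^N\big|X_r^{h,N,j}\big|^2+1\Big)\Big]\dif s,
$$
with $C$ depending only on $T$ and $c_0$.

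Finally I would sum this inequality over $i=1,\dots,N$, note that $\sum_{i=1}^N\frac1N\sum_{j=1}^N|X_r^{h,N,j}|^2=\sum_{i=1}^N|X_r^{h,N,i}|^2$, use $\mE[\sup_{r\le s}\sum_i(\cdots)]\le\sum_i\mE[\sup_{r\le s}(\cdots)]$ together with the exchangeability of $(X_\cdot^{h,N,1},\dots,X_\cdot^{h,N,N})$ to return to a single index, and close the estimate with Gronwall's inequality, obtaining~\eqref{2-2} with a constant independent of both $h$ and $N$. I do not anticipate any genuine obstacle here: the proof is a line-by-line copy of that of Lemma~\ref{lem-2}, the only additional observation being that freezing the coefficients at grid times only decreases the quantity being estimated, after which the computation is identical.
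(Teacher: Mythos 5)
Your proposal is correct and follows essentially the same route as the paper: the paper's own proof simply says to repeat the argument of Lemma~\ref{lem-2}, using the observation that the grid-frozen quantity $\big|X_{r_h}^{h,N,i}\big|^2$ is dominated by the running supremum over $[0,s]$, which is exactly the single new point you identify. Your additional remarks (the preliminary finiteness bound justifying the absorption step, and the explicit summation/exchangeability step) only flesh out details the paper leaves implicit.
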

\begin{proof} It is similar to the proof of Lemma \ref{lem-2} about main verified processes. Combining with
$$
\sup_{0\leq r\leq s}\big|X_{r_h}^{h,N,i}\big|^2\leq \left|X_s^{h,N,i}\right|^2,
$$
the conclusion is true.
\end{proof}
\begin{lem}\label{lem-3} Under the hypothesis of Lemma \ref{lem-2}, we have for any $s,t\in[0,T]$,
\beq\label{lem-3-1}
\mE\left|X_t^{N,i}-X_s^{N,i}\right|^2\leq C(1+\mE|\xi_i|^2)(t-s)
\deq
and
\beq\label{lem-3-2}
\mE\left|X_t^{h,N,i}-X_s^{h,N,i}\right|^2\leq C(1+\mE|\xi_i|^2)(t-s),
\deq
where $C$ is a constant depending on $T$ and $c_0$ but independent of $N$.
\end{lem}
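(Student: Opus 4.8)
The plan is to treat the two estimates \eqref{lem-3-1} and \eqref{lem-3-2} in parallel, since both follow from the same mechanism: the increment of an Itô process over a time interval $[s,t]$ is controlled by the $L^2$-norms of its drift and diffusion coefficients, which in turn are bounded via the linear growth hypothesis $\mathbf{(H_1)}$ together with the uniform moment bounds already established in Lemma \ref{lem-2} and Corollary \ref{cor-1}. First I would write, for $0\le s\le t\le T$,
\[
X_t^{N,i}-X_s^{N,i}=\int_s^t\frac1N\sum_{j=1}^N b\big(X_r^{N,i},X_r^{N,j}\big)\,\dif r+\int_s^t\frac1N\sum_{j=1}^N\s\big(X_r^{N,i},X_r^{N,j}\big)\,\dif W_r^i,
\]
then apply the elementary inequality $|a+b|^2\le 2|a|^2+2|b|^2$, Jensen's inequality on the time integral for the drift term (producing a factor $t-s$), and the Itô isometry for the stochastic integral. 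This reduces matters to bounding
\[
\mE\Big[\frac1N\sum_{j=1}^N\big(|b(X_r^{N,i},X_r^{N,j})|^2+\|\s(X_r^{N,i},X_r^{N,j})\|^2\big)\Big],
\]
and by $\mathbf{(H_1)}$ and Jensen this is $\le C\,\mE\big[1+|X_r^{N,i}|^2+\frac1N\sum_j|X_r^{N,j}|^2\big]$.

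Next I would invoke Lemma \ref{lem-2}, which gives $\mE\big[\sup_{r\le T}|X_r^{N,i}|^2\big]\le C(1+\mE|\xi_i|^2)$ uniformly in $N$; by the symmetry of the particle system (all $\xi_i$ are i.i.d.\ with law $\nu$), the averaged term $\frac1N\sum_j\mE|X_r^{N,j}|^2$ is bounded by the same constant. Hence the integrand above is bounded by a constant $C(1+\mE|\xi_i|^2)$ independent of $r$ and of $N$, and integrating over $[s,t]$ yields \eqref{lem-3-1} with the claimed linear dependence on $t-s$. For \eqref{lem-3-2} the argument is identical, except the coefficients are frozen at the grid point $r_h$; here I would use $\sup_{0\le r\le s}|X_{r_h}^{h,N,i}|^2\le\sup_{0\le r\le s}|X_r^{h,N,i}|^2$ (as in the proof of Corollary \ref{cor-1}) together with Corollary \ref{cor-1} itself to get the same uniform-in-$N$ moment bound, and then the Itô isometry / Jensen estimate goes through verbatim.

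I do not anticipate a genuine obstacle here — the lemma is a standard time-regularity estimate and every ingredient is already in place. The only point requiring a little care is keeping the constants uniform in $N$: one must use the summed-then-desummed symmetrization trick from Lemma \ref{lem-2} rather than trying to bound $\frac1N\sum_j\mE|X_r^{N,j}|^2$ term by term, and one must make sure the constant $C$ absorbs $T$, $c_0$, $\l_1$, $\l_2$ but nothing $N$-dependent. A secondary minor point is that the same estimate is being quoted later for the Euler scheme, so I would state it for general $s\le t$ (not just $t=s+h$) to keep it reusable; no extra work is needed for that since the computation never used smallness of $t-s$.
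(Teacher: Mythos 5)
Your proposal is correct and follows essentially the same route as the paper: decompose the increment, apply Jensen/H\"older and the It\^o isometry (the paper uses BDG, which is equivalent here since no supremum in $t$ is taken), invoke $\mathbf{(H_1)}$, and handle the averaged term $\frac1N\sum_j|X_r^{N,j}|^2$ by the symmetry of the particle system. If anything, your concluding step is cleaner: you plug in the uniform moment bound of Lemma \ref{lem-2} (resp.\ Corollary \ref{cor-1}) directly to extract the factor $t-s$, whereas the paper instead appeals to Gronwall's inequality at this point, which does not literally apply since the quantity under the integral ($\mE|X_u^{N,i}|^2$) is not the quantity being estimated on the left.
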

\begin{proof}
By the H\"older's inequality, BDG's inequality and the assumption $\mathbf{(H_1)}$, it holds that
$$
\aligned
&\mE\left|X_t^{N,i}-X_s^{N,i}\right|^2=
\\ \leq & 2\mE\bigg[\Big|\int_s^t\frac{1}{N}\sum_{j=1}^Nb\left(X_{u}^{N,i},X_{u}^{N,j}\right)\dif u\Big|^2+\Big|\int_s^t\frac{1}{N}
\sum_{j=1}^N\s\left(X_{u}^{N,i},X_{u}^{N,j}\right)\dif W_u^i\Big|^2\bigg]\\
\leq&2\int_s^t\mE\Big[\frac{1}{N}\sum_{j=1}^N\left(\left|b\left(X_{u}^{N,i},
X_{u}^{N,j}\right)
\right|^2+\left|\s\left(X_{u}^{N,i},X_{u}^{N,j}\right)\right|^2\right)\Big]\dif u \\
\leq& 2c_0\int_s^t\mE\Big[\frac{1}{N}\sum_{j=1}^N\left(\left|X_{u}^{N,i}\right|^2
+\left|X_{u}^{N,j}\right|^2
+1\right)\Big]\dif u.\\
\endaligned
$$
Again summing and removing over $i$ of both sides with using symmetry of above inequality, then
$$
\mE\left|X_t^{N,i}-X_s^{N,i}\right|^2\leq C\int_s^t\mE\left(\left|X_{u}^{N,i}
\right|^2+1\right)\dif u,
$$
where the constant $C$ depends on $c_0$.

At last, by Gronwall's inequality, we conclude the desired estimate. In the similar way as above  discussions, it holds that $X_t^{h,N,i}$ satisfies Equation (\ref{lem-3-2}).
\end{proof}
\section{Proof of Theorem \ref{thm-a}}
Based on the conditions $\mathbf{(H_1)}$ and $\mathbf{(H_2)}$ for Eq.(\ref{a-1}), we have
$$
|\bar{b}(x,\mu)|+|\bar{\s}(x,\mu)|\leq C\left(1+|x|+\mW_1(\mu,\delta_0)\right).
$$
Using (\ref{1-7}) and the definition of $\rho_{\eta}$, there is a sufficiently small $0<\eta<1/e$, for any $x\in\mR^+,$ such that
  \beq\label{a-2}
 x\gamma_i(x)\leq \rho_{\eta}(x),
 \deq
 and
 \beq\label{a-4}
x^2\gamma_i(x)\leq \rho_{\eta}(x^2).
\deq
By $\mathbf{(H_1)}$ and $\mathbf{(H_2)}$ and Jensen's inequality, similarly as in the proof of \cite[Lemma~1.3]{A}, we have
$$
\aligned
|\bar{b}(x_1,\mu_1)-\bar{b}(x_2,\mu_2)|
=&\Big|\int_{\mR^d}b(x_1,y_1)\mu_1(\dif y_1)-\int_{\mR^d}b(x_1,y_2)\mu_1(\dif y_1)\Big|\\
\quad\quad&+\Big|\int_{\mR^d}b(x_1,y_2)\mu_1(\dif y_1)-\int_{\mR^d}b(x_2,y_2)\mu_2(\dif y_2)\Big|\\
&\leq\lambda_1|x_1-x_2|\gamma_1(|x_1-x_2|)+\lambda_1\int_{\mR^d\times \mR^d}\rho_{\eta}(|y_1-y_2|)\pi(\dif y_1,\dif y_2)\\
&\leq \lambda_1|x_1-x_2|\gamma_1(|x_1-x_2|)+\lambda_1\rho_{\eta}\big(\mW_1(\mu_1,\mu_2)\big).
 \endaligned
$$
By the similar deduction to above it holds that
$$
\|\bar{\s}(x_1,\mu_1)-\bar{\s}(x_2,\mu_2)\|^2\leq \lambda_2|x_1-x_2|^2\gamma_2(|x_1-x_2|)+\lambda_2\rho_{\eta}\big(\mW_2(\mu_1,\mu_2)^2\big).
$$

Especially, if $b(x,y)$ and $\s(x,y)$ are Lipschitz continuous in $y$, the corresponding $\bar{b}(x,\mu)$ and $\bar{\s}(x,\mu)$  are Lipschitz continuous in $\mu$. In this situation, by \cite[Theorem~3.1]{DQ},  the weak existence of (\ref{1-1}) and pathwise uniqueness imply the strong existence of (\ref{a-1}).
In this part, we will
directly construct the strong solutions to (\ref{a-1}) by the method of successive approximations under  the more general conditions $\mathbf{(H_1)}$ and $\mathbf{(H_2)}$.

\begin{proof}[Proof of Theorem \ref{thm-a}]

 Consider the following distribution-iterated SDEs: for any $t\in[0,T]$ and for each $k\geq 1$,
\beq\label{1-8}
\aligned
 X_t^{(k)}=&X_0^{(k)}+\int_0^t\int_{\mR^d}b(X_s^{(k)},\om_s^{(k-1)})\mu_{k-1}(\dif \om^{(k-1)})\dif s\\ &+\int_0^t\int_{\mR^d}\s(X_s^{(k)},\om_s^{(k-1)})\mu_{k-1}(\dif \om^{(k-1)})\dif W_s,\\
 \endaligned
\deq
where $X_0^{(k)}=\xi$ and $\mu_{k-1}$ stands for the law of $X_s^{(k-1)}$.

By \cite[Theorem~4.1]{Zh1}, Eq.(\ref{1-8})  has a unique solution $(X_t^{(k)})_{t\geq 0}$. Next, existence of solutions to (\ref{a-1}) can be shown by verifying the Cauchy sequence.

With BDG's and H\"older's inequality, it follows that
\beq\label{a-6}
\mE\Big[\sup_{t\in[0,T]}\big|X_t^{(1)}\big|^2\Big]\leq C(1+\mE|\xi|^2),
\deq
where the detailed proof can be found  in literature \cite{BH} or \cite{DQ}.

Indeed, this can be handled in same manner by using the triple $(X^{(k+1)},X^{(k)},\mu^{(k)})$ in lieu of
$(X^{(1)},\xi,\nu)$. Therefore, (\ref{a-6}) still holds true for $k+1$. That is, for each $k\geq 1$, it holds that
\beq\label{aa-6}
\mE\Big[\sup_{t\in[0,T]}\big|X_t^{(k)}\big|^2\Big]\leq C(1+\mE|\xi|^2).
\deq

For notation brevity, we set
$$
Z_t^{(k+1)}:=X_t^{(k+1)}-X_t^{(k)}.
$$
By It\^{o}'s formula, then
$$
\aligned
|Z_t^{(k+1)}|^2&=\int_0^t\int_{\mR^d}Z_s^{(k+1)}\Big[b(X_s^{(k+1)},\om_s^{(k)})\mu_k(\dif \om^{(k)})-b(X_s^{(k)},\om_s^{(k-1)})\mu_{k-1}(\dif \om^{(k-1)})\Big]\dif s\\
&\quad+\int_0^t\int_{\mR^d}Z_s^{(k+1)}\left[\s(X_s^{(k+1)},\om_s^{(k)})\mu_k(\dif \om^{(k)})-\s(X_s^{(k)},\om_s^{(k-1)})\mu_{k-1}(\dif \om^{(k-1)})\right]\dif W_s.\\
&\quad+\frac{1}{2}\int_0^t\int_{\mR^d}\left\|\s(X_s^{(k+1)},\om_s^{(k)})\mu_k(\dif \om^{(k)})-\s(X_s^{(k)},\om_s^{(k-1)})\mu_{k-1}(\dif \om^{(k-1)})\right\|^2\dif s\\
&=:I_1(t)+I_2(t)+I_3(t).
\endaligned
$$
By $\mathbf{(H_1)}$, we show
$$
\aligned
I_1(t)=
&\int_0^t\int_{\mR^d}Z_s^{(k+1)}\Big[b(X_s^{(k+1)},\om_s^{(k)})\mu_k(\dif \om^{(k)})-b(X_s^{(k+1)},\om_s^{(k-1)})\mu_{k-1}(\dif \om^{(k-1)})\Big]\dif s\\
&+\int_0^t\int_{\mR^d}Z_s^{(k+1)}\Big[b(X_s^{(k+1)},\om_s^{(k-1)})\mu_{k-1}(\dif \om^{(k-1)})-b(X_s^{(k)},\om_s^{(k-1)})\mu_{k-1}(\dif \om^{(k-1)})\Big]\dif s\\
\leq&C\int_0^t\Big[|Z_s^{(k+1)}|^2\gamma_1(|Z_s^{(k+1)}|)+Z_s^{(k+1)}\rho_{\eta}\Big(\mW_1(\mu_s^{(k)},\mu_s^{(k-1)})\Big)\Big]\dif s\\
\endaligned
$$
Similarly,
by $\mathbf{(H_2)}$, we have
$$\aligned
I_3(t)\leq &C\int_0^t\Big[|Z_s^{(k+1)}|^2\gamma_2(|Z_s^{(k+1)}|)\Big.\\
&\quad+\Big.\int_{\mR^d\times \mR^d}|\om_s^{(k)}-\om_s^{(k-1)}|^2\gamma_2(|\om_s^{(k)}-\om_s^{(k-1)}|) \pi(\dif\om_s^{(k)},\dif\om_s^{(k-1)})\Big]\dif s\\
\leq&C\int_0^t\Big[\rho_\eta(|Z_s^{(k+1)}|^2)+\rho_{\eta}\Big(\mW_2\left(\mu_s^{(k)},\mu_s^{(k-1)}\right)^2\Big)\Big]\dif s
\endaligned
$$
By  BDG's inequality and Young's inequality,  we have $$\aligned
\mE\bigg(\sup_{t\in[0,T]} I_2(t)\bigg)&\leq C\mE\bigg(\int_0^T\int_{\mR^d}\Big[|Z_s^{(k+1)}|^2\rho_\eta(|Z_s^{(k+1)}|)+|Z_s^{(k+1)}|^2\rho_{\eta}\Big(\mW_2\left(\mu_s^{(k)},\mu_s^{(k-1)}\right)^2\Big)\Big]\dif s\bigg)^{\frac{1}{2}}\\
&\leq C\mE\bigg(\sup_{t\in[0,T]}|Z_t^{(k+1)}|^2\int_0^T\int_{\mR^d}\Big[\rho_\eta(|Z_s^{(k+1)}|)+\rho_{\eta}\Big(\mW_2\left(\mu_s^{(k)},\mu_s^{(k-1)}\right)^2\Big)\Big]\dif s\bigg)^{\frac{1}{2}}\\
&\leq C\mE\bigg(\sup_{t\in[0,T]}\int_0^t\int_{\mR^d}\Big[\rho_\eta(|Z_s^{(k+1)}|)+\rho_{\eta}\Big(\mW_2\left(\mu_s^{(k)},\mu_s^{(k-1)}\right)^2\Big)\Big]\dif s\bigg)\\
&\quad+ \frac{1}{4}\mE\bigg(\sup_{t\in[0,T]} |Z_t^{(k+1)}|^2\bigg)\\
\endaligned
$$
Then, with Jensen's inequality and Young's inequality,
$$
\aligned
\mE\bigg(\sup_{t\in[0,T]} |Z_t^{(k+1)}|^2\bigg)&\leq C\int_0^T\rho_{\eta}\bigg(\mE\Big(\sup_{r\in[0,s]}|Z_r^{(k+1)}|^2\Big)\bigg)\dif r\\
&\quad+C\int_0^T\bigg[\rho^2_{\eta}\Big(\sup_{r\in[0,s]}\mW_1(\mu_r^{(k)},\mu_r^{(k-1)})\Big)+\rho_{\eta}\Big(\sup_{r\in[0,s]}\mW_2(\mu_r^{(k)},\mu_r^{(k-1)})^2\Big)\bigg]\dif r\\
&\leq C\int_0^T\rho_{\eta}\bigg(\mE\Big(\sup_{r\in[0,s]}|Z_r^{(k+1)}|^2\Big)\bigg)\dif r\\
&\quad+C\int_0^T\bigg[\rho^2_{\eta}\Big(\mE\Big(\sup_{r\in[0,s]}|Z_r^{(k)}|^2\Big)^{\frac{1}{2}}\Big)+\rho_{\eta}\Big(\mE\Big(\sup_{r\in[0,s]}|Z_r^{(k)}|^2\Big)\Big)\bigg]\dif r,
\endaligned
$$
where the last inequality is established with properties of $\mW_p$-Wasserstein distance, as following:
\beq
\sup_{r\in[0,s]}\mW_1\left(\mu_r^{(k)},\mu_r^{(k-1)}\right)\leq \mE\Big(\sup_{r\in[0,s]}\left|X_r^{(k)}-X_r^{(k-1)}\right|^2\Big)^{\frac{1}{2}},
\deq
and
\beq
\sup_{r\in[0,s]}\mW_2\left(\mu_r^{(k)},\mu_r^{(k-1)}\right)^2\leq \mE\Big(\sup_{r\in[0,s]}\left|X_r^{(k)}-X_r^{(k-1)}\right|^2\Big).
\deq

Furthermore, with the aid of  (\ref{a-6}), 
there exists a constant $a$ such that
$$\mE\Big[\sup_{r\in[0,s]}\left|Z_r^{(1)}\right|^2\Big]\leq a.$$ Therefore,
by iteration and  Lemma \ref{lem-1}, we have
\beq\label{a-5}
\mE\Big[\sup_{t\in[0,T]}\big|Z_t^{(k+1)}\big|^2\Big]\leq [\rho^2_\eta(\sqrt{a})+\rho_{\eta}(a)]^{\exp(-kCT)}.
\deq

Then, there is an $(\cF_t)_{t\in[0,T]}$-adapted continuous stochastic process $(X_t)_{t\in[0,T]}$ and $\mu_t$ is the law of $X_t$ such that
\beq\label{aaa-5}
\lim_{k\rightarrow\infty}\mE\Big[\sup_{t\in[0,T]}\big|X_t^{(k+1)}-X_t\big|^2\Big]=0,
\deq
and
\beq\label{ac-5}
\lim_{k\rightarrow\infty}\sup_{t\in[0,T]}\mW_2\left(\mu_t^{(k)},\mu_r\right)^2\leq \lim_{k\rightarrow\infty}\mE\bigg[\sup_{t\in[0,T]}\big|X_t^{(k)}-X_t\big|^2\bigg]=0.
\deq
By $\mathbf{(H_1)}$, we find
$$
\aligned
&\int_0^t\int_{\mR^d}\left[b(X_s^{(k+1)},\om_s^{(k)})\mu_k(\dif \om^{(k)})-b(X_s,\om_s)\mu(\dif \om)\right]\dif s\\
&\leq C\int_0^T\left[\int_{\mR^d\times \mR^d}|\om_s^{(k)}-\om_s|\gamma_1\big(|\om_s^{(k)}-\om_s|\big) \pi(\dif\om_s^{(k)},\dif\om_s)\right]\dif s\\
\quad\quad&+ C\int_0^T\left[|X_s^{(k+1)}-X_s|\gamma_1\big(|X_s^{(k+1)}-X_s|\big) \right]\dif s\\
&\leq C\int_0^T\left[
\rho_{\eta}\big(\mW_1(\mu_s^{(k)},\mu_s)\big)+\rho_{\eta}\big(|X_s^{(k+1)}-X_s|\big)\right]\dif s
\endaligned
$$
By  dominated
convergence theorem and (\ref{aaa-5}) and (\ref{ac-5}), we  obtain
$$\lim_{k\rightarrow\infty}\mE\left|\int_0^T\int_{\mR^d}\left[b(X_s^{(k+1)},\om_s^{(k)})\mu_k(\dif \om^{(k)})-b(X_s,\om_s)\mu(\dif \om)\right]\dif s\right|=0.$$

Similarly,
$$\lim_{k\rightarrow\infty}\mE\left|\int_0^T\int_{\mR^d}\left[\s(X_s^{(k+1)},\om_s^{(k)})\mu_k(\dif \om^{(k)})-\s(X_s,\om_s)\mu(\dif \om)\right]\dif W_s\right|=0.$$
By taking $k\rightarrow\infty$ in the equation (\ref{1-8}), we derive  (by extracting a suitable subsequence)
SDE (\ref{a-1}).

On the other hand, we show the uniqueness of (\ref{a-1}). We assume that $(X_t^1)_{t\geq 0}$ and $(X_t^2)_{t\geq 0}$ are solutions to (\ref{a-1}) with the same initial value $\xi$. By the similar way, one has
$$
\lim_{k\rightarrow\infty}\mE\Big[\sup_{t\in[0,T]}\left|X_t^{1}-X_t^{2}\right|^2\Big]\leq C\int_0^T\rho_{\eta}\Big(\mE\Big[\sup_{r\in[0,s]}|X_r^{1}-X_r^{2}|^2\Big]\Big)\dif s.
$$
Once again invoking Lemma \ref{lem-1}, yields the uniqueness.

Finally, we intend to show (\ref{1-a}). Applying H\"older's inequality, Jensen's inequality, BDG's inequality and the condition $\mathbf{(H_1)}$, we show
$$
\aligned
\mE\Big[\sup_{t\in[0,T]}\left|X_t\right|^2\Big]\leq & 3\bigg\{\mE|\xi|^2+T\int_0^T\mE\Big[\sup_{r\in[0,s]}\Big|\int_{\mR^d}b(X_r,y)\mu(\dif y)\Big|^2\Big]\dif s .\\
&+\int_0^T\mE\Big[\sup_{r\in[0,s]}\Big|\int_{\mR^d}\s(X_r,y)\mu(\dif y)\Big|^2\Big]\dif s\bigg\} \\
\leq&C\bigg\{1+\mE|\xi|^2+\int_0^T\mE\Big[\sup_{r\in[0,s]}\left|X_r\right|^2\Big]\dif s+\int_0^T\Big[\sup_{r\in[0,s]}\mW_1(\mu_r,\d_0)^2\dif s\Big]\bigg\}\\
\leq &C(1+\mE|\xi|^2)+C\int_0^T\mE\Big(\sup_{r\in[0,s]}\left|X_r\right|^2\Big)\dif s .\\
\endaligned
$$
Together with Gronwall's inequality, implies
$$
\mE\Big[\sup_{t\in[0,T]}|X_t|\Big]\leq C(1+\mE|\xi|^2).
$$
So the proof is finished.
\end{proof}
\section{Proof of Theorem \ref{thm-1}}
\begin{proof}[Proof of Theorem \ref{thm-1}]Under the conditions $\mathbf{(H_1)}$ and $\mathbf{(H_2)}$, combining with Eq.(\ref{1-2}) and (\ref{1-3}), we find that
\begin{align*}
X_t^{N,i}-X_t^i=&\int_0^t\Big[\frac{1}{N}\sum_{j=1}^Nb(X_{s}^{N,i},
X_{s}^{N,j})
-\int_{\mR^d}b(X_{s}^i,y)\mu_{X_{s}^i}(\dif y)\Big]\dif s\\
&+\int_0^t\Big[\frac{1}{N}
\sum_{j=1}^N\s\left(X_{s}^{N,i},X_{s}^{N,j}\right) -\int_{\mR^d}\s(X_{s}^i,y)\mu_{X_{s}^i}(\dif y)\Big]\dif W_s^i\\
=&\int_0^t\frac{1}{N}\sum_{j=1}^N\Big[\{b(X_{s}^{N,i},X_{s}^{N,j})-b(
X_{s}^{N,i},X_{s}^{N,j})+\tilde{b}(
X_{s}^{N,i},X_{s}^{N,j})\Big] \dif s\\
&+\int_0^t\frac{1}{N}\sum_{j=1}^N\Big[\s(X_{s}^{N,i},X_{s}^{N,j})-
\s(
X_{s}^{N,i},X_{s}^{N,j})+\tilde{\s}\left(
X_{s}^{N,i},X_{s}^{N,j}\right)\Big]\dif W_s^i.
\end{align*}
For simplicity of notation, $\tilde{b}(x,x')$ and $\tilde{\s}(x,x')$ can be redefined:
\beq\label{3-1}
\tilde{b}(x,x'):=b(x,x')-\int_{\mR^d}b(x,y)\mu_{x}(\dif y)
\deq
and
\beq\label{3-2}
\tilde{\s}(x,x'):=\s(x,x')-\int_{\mR^d}\s(x,y)\mu_{x}(\dif y).
\deq
By It\^o's formula and Jensen's inequality, we get
$$
\aligned
&\mE\Big[\sup_{t\in[0,T]}\Big|X_t^{N,i}-X_t^i\Big|^2\Big]
\\
\leq& 2\l_1\int_0^T\Big[\mE\Big(\sup_{r\in[0,s]}\Big|X_r^{N,i}-X_r^{i}\Big|^2\gamma_1\big(
\left|X_r^{N,i}-X_r^{i}\right|\big)\Big)\\
 &+\frac{1}{N}\sum_{j=1}^N\mE\Big(\sup_{r\in[0,s]}\left|X_r^{N,i}-X_r^{i}\right|\left|X_r^{N,j}-X
_r^{j}\right|\gamma_1\big(\left|X_r^{N,j}-X_r^{j}\right|\big)\Big)\Big]\dif s\\
&+\l_2\int_0^T\Big[\mE\Big(\sup_{r\in[0,s]}\left|X_r^{N,i}-X_r^{i}\right|^2\gamma_2\big(
\left|X_r^{N,i}-X_r^{i}\right|\big)\Big)\\
 &+
\frac{1}{N}\sum_{j=1}^N\mE\Big(\sup_{r\in[0,s]}\left|X_r^{N,j}-X_r^{j}\right|^2\gamma_2\left(
\left|X_r^{N,j}-X_r^{j}\right|\right)\Big)\Big]\dif s+\int_0^T\mE\Big[\sup_{r\in[0,s]}\left(\left|X_r^{N,i}-X_r^{i}\right|^2\right)\Big]\dif s\\
&+\int_0^T\mE\Big[\sup_{r\in[0,s]}\Big(\frac{1}{N}\sum_{j=1}^N\tilde{b}
\left(X_{r}^{i},X_{r}^{j}\right)\Big)^2 +\Big(\frac{1}{N}\sum_{j=1}^N\tilde{\s}\left(X_{r}^{i},X_{r}^{j}\right)\Big)^2\Big]\dif s.\\
\endaligned
$$
Using symmetry technique as in the proof of Lemma \ref{lem-2}, we have
\beq\label{2-4}
\aligned
&\mE\Big[\sup_{t\in[0,T]}\left|X_t^{N,i}-X_t^i\right|^2\Big]\\
\leq&4\l_1\int_0^T\mE\Big(\sup_{r\in[0,s]}\left|X_r^{N,i}-X_r^{i}\right|^2\gamma_1\left(
\left|X_r^{N,i}-X_r^{i}\right|\right)\Big)\dif s\\ &+ 2\l_2\int_0^T\mE\Big(\sup_{r\in[0,s]}\left|X_r^{N,i}-X_r^{i}\right|^2\gamma_2\left(
\left|X_r^{N,i}-X_r^{i}\right|\right)\Big)\dif s\\
& +\int_0^T \mE\Big(\sup_{r\in[0,s]}\left|X_r^{N,i}-X_r^{i}\right|^2\Big)\dif s
+\int_0^T \mE\Big[\sup_{r\in[0,s]}\Big|\frac{1}{N}\sum_{j=1}^N\tilde{b}\left(X_{r}^{i},X_{r}^{j}
\right)\Big|^2\Big]
\dif s\\
&
+\int_0^T\mE\Big[\sup_{r\in[0,s]}\Big|\frac{1}{N}\sum_{j=1}^N\tilde{\s}\left(X_{r}^{i},
X_{r}^{j}\right)\Big|^2
\Big]\dif s.\\
\endaligned
\deq

And due to the centering of $\tilde{b}(x,y)$ and $\tilde{\s}(x,y)$ with to their second variable, then if $j\neq k$ such that
\beq\label{3-3}
\mE\left[\tilde{b}(X_s^i,X_s^j)
\tilde{b}(X_s^i,X_s^k)\right]=0
\deq
and
\beq\label{3-4}
\mE\left[\tilde{\s}(X_s^i,X_s^j)
\tilde{\s}(X_s^i,X_s^k)\right]=0.
\deq
In light of Lemma \ref{lem-2}, we can obtain
\beq\label{3-5}
\aligned
&\mE\Big[\sup_{r\in[0,s]}\Big|\frac{1}{N}\sum_{j=1}^N\tilde{b}\left(X_r^{i},X_r^{j}\right)\Big|^2\Big]
=\frac{1}{N^2}\mE\Big[\sup_{r\in[0,s]}\Big|\sum_{j,k}\tilde{b}(X_r^i,X_r^j)
\tilde{b}(X_r^i,X_r^k)\Big|\Big]\\
&\qquad\qquad\qquad\leq  \frac{2}{N^2}\sum_{j=1}^N\mE\Big[\sup_{r\in[0,s]}\left|\tilde{b}\left(
X_r^{i},X_r^{j}\right)\right|^2\Big]\\
&\qquad\qquad\qquad\leq \frac{C}{N^2}\sum_{j=1}^N\Big(1+\mE\Big[\sup_{r\in[0,s]}|X_r^i|^2\Big]+\mE\Big[
\sup_{r\in[0,s]}|X_r^j|^2\Big]\Big) \leq \frac{C}{N}\\
\endaligned
\deq
and
\beq\label{3-6}
\mE\Big[\sup_{r\in[0,s]}\Big|\frac{1}{N}\sum_{j=1}^N\tilde{\s}\left(X_r^{i},X_r^{j}\right)\Big|^2\Big]
\leq \frac{C}{N}.
\deq

Furthermore, based on (\ref{a-2})-(\ref{a-4}) and (\ref{3-3})-(\ref{3-6}), the above inequality (\ref{2-4}) is
$$
\mE\Big[\sup_{t\in[0,T]}\left|X_t^{N,i}-X_t^i\right|^2\Big]\leq(2\l_1+\l_2)\int_0^T\rho_{\eta}
\Big(\mE\big(\sup_{r\in[0,s]}\left|X_r^{N,i}-X_r^i\right|^2\big)\Big)\dif s+\frac{C}{N}.
$$
Applying Gronwall-Belmman's inequality (Lemma \ref{lem-1}), we find
\beq\label{2-1}
\mE\Big[\sup_{t\in[0,T]}\left|X_t^{N,i}-X_t^i\right|^2\Big]\leq \frac{C}{N}.
\deq
 Thereby,
$$
\sup_{i=1,2,\cdots,N}N\mE\Big[\sup_{t\leq T}\left|X_t^{N,i}-X_t^i\right|^2\Big]<\infty.
$$
And by $(\ref{2-1})$, for $N$ is sufficiently large, the conclusion (\ref{1-4}) is established.
\end{proof}

\section{Proof of Theorem \ref{thm-2}}
\begin{lem}\label{lem-4} Under the assumptions of Theorem \ref{thm-2}, for any $T>0$ and some $\a\in(0,\frac{1}{2})$, there is a constant $C(T,\mE[|\xi|^2],\l_1,\l_2,c_0)>0$, we have
\beq\label{lem-4-1}
\sup_{i=1,2,\cdots,N}\mE\Big(\sup_{ t\in[0,T]}\big|X_t^{N,i}-X_t^{h,N,i}\big|^2\Big)<C h^{2\alpha}.
\deq
for $h\in(0,1)$ sufficiently small.
\end{lem}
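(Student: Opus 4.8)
The plan is to compare the continuous $N$-interacting particle system \eqref{1-2} with its Euler discretization \eqref{1-2'} by writing the difference $Y_t^{N,i}:=X_t^{N,i}-X_t^{h,N,i}$, applying It\^o's formula to $|Y_t^{N,i}|^2$, taking the supremum in time, and then expectations, exactly in the spirit of the proof of Theorem \ref{thm-1}. The key point is that the drift and diffusion of the Euler scheme are frozen at the grid point $t_h$, so the increment splits as
$$
b(X_s^{N,i},X_s^{N,j})-b(X_{s_h}^{h,N,i},X_{s_h}^{h,N,j})
=\Big[b(X_s^{N,i},X_s^{N,j})-b(X_{s_h}^{N,i},X_{s_h}^{N,j})\Big]
+\Big[b(X_{s_h}^{N,i},X_{s_h}^{N,j})-b(X_{s_h}^{h,N,i},X_{s_h}^{h,N,j})\Big],
$$
and similarly for $\s$. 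The first bracket is a ``time-regularity'' term controlled through $\mathbf{(H_2)}$ together with the linear growth $\mathbf{(H_1)}$; by Lemma \ref{lem-3} we have $\mE|X_s^{N,i}-X_{s_h}^{N,i}|^2\leq C(s-s_h)\leq Ch$, and since $\gamma_i$ is bounded on $[1,\infty)$ and $x\gamma_i(x),x^2\gamma_i(x)$ are bounded near $0$ by \eqref{a-2}--\eqref{a-4}, one obtains an $O(h)$ (hence $O(h^{2\a})$ for $\a<\tfrac12$) contribution after using symmetry over $i$ as in Lemma \ref{lem-2}. The second bracket is the genuine ``error-propagation'' term: by $\mathbf{(H_2)}$ it is bounded by $\l_1(|Y_{s_h}^{N,i}|\gamma_1(|Y_{s_h}^{N,i}|)+|Y_{s_h}^{N,j}|\gamma_1(|Y_{s_h}^{N,j}|))$ and its diffusion analogue, and after symmetrization and the bounds \eqref{a-2}--\eqref{a-4} it is controlled by $\int_0^T\rho_\eta\big(\mE\sup_{r\le s}|Y_r^{N,i}|^2\big)\dif s$.

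Concretely, I would carry out the steps in this order. First, apply It\^o and BDG (the martingale term is handled by Young's inequality as in the proof of Theorem \ref{thm-a}, absorbing $\tfrac14\mE\sup_{t\le T}|Y_t^{N,i}|^2$ into the left side). Second, insert the splitting above and estimate the time-regularity pieces: bound $\|\s(X_s^{N,i},X_s^{N,j})-\s(X_{s_h}^{N,i},X_{s_h}^{N,j})\|^2\le\l_2(|X_s^{N,i}-X_{s_h}^{N,i}|^2\gamma_2(\cdot)+|X_s^{N,j}-X_{s_h}^{N,j}|^2\gamma_2(\cdot))$, use that $\gamma_2$ is bounded on bounded sets together with Lemma \ref{lem-3} and Corollary \ref{cor-1}/Lemma \ref{lem-2} to get a moment bound, and split the range of $|X_s-X_{s_h}|$ into $\{\le1\}$ and $\{>1\}$ so the factor $\gamma_2$ is uniformly bounded on each piece; this yields $\le Ch$. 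Third, for the error-propagation pieces use $\mathbf{(H_2)}$ and \eqref{a-2}--\eqref{a-4}, replace $|Y_{s_h}^{N,i}|$ by $\sup_{r\le s}|Y_r^{N,i}|$, and symmetrize over $i$ to remove the $\tfrac1N\sum_j$ averages, exactly as in \eqref{2-4}. Fourth, set $\phi(t):=\mE\sup_{r\le t}|Y_r^{N,i}|^2$ (which is finite and bounded by $Ch$ at a first crude pass, small enough to lie below $\eta$ for $h$ small, so Lemma \ref{lem-1} applies), obtaining
$$
\phi(T)\le Ch+(2\l_1+\l_2)\int_0^T\rho_\eta(\phi(s))\dif s .
$$
Finally, invoke the Bihari-type Lemma \ref{lem-1} with $g(0)=Ch$; since the resulting bound is of the form $(Ch)^{\exp(-CT)}$, and $\exp(-CT)$ can be taken $>2\a$ for any prescribed $\a\in(0,\tfrac12)$ once $h$ is small (or, more simply, since for any fixed $\a<\tfrac12$ one has $(Ch)^{\exp(-CT)}\le Ch^{2\a}$ for $h$ small after possibly shrinking the exponent in Lemma \ref{lem-1}'s hypothesis), the claim \eqref{lem-4-1} follows.

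The main obstacle I anticipate is making the passage from the Bihari estimate $(Ch)^{\exp(-CT)}$ to the clean rate $h^{2\a}$ honest: a priori $\exp(-CT)$ is a fixed exponent strictly less than $1$ that depends on $T$, so one does \emph{not} automatically get $h^{2\a}$ for every $\a<\tfrac12$ unless $T$ is small or the constant in $\rho_\eta$ is controlled. The way around this is the standard iteration-over-subintervals device: partition $[0,T]$ into finitely many intervals of length $\le T_0$ with $(2\l_1+\l_2)T_0$ small enough that $\exp(-(2\l_1+\l_2)T_0)>2\a$, run the Bihari estimate on each subinterval starting from the (controlled) error accumulated at its left endpoint, and concatenate; since there are only $\lceil T/T_0\rceil$ steps and the one-step bound improves the power favorably, the global error stays $\le Ch^{2\a}$ with $C=C(T,\mE|\xi|^2,\l_1,\l_2,c_0)$. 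A secondary technical nuisance is that the Euler terms are evaluated at $X_{s_h}^{h,N,i}$ rather than at $X_{s_h}^{N,i}$, so when applying $\rho_\eta$-concavity and Jensen one must be careful that $\sup_{r\le s}|Y_{r_h}^{N,i}|\le\sup_{r\le s}|Y_r^{N,i}|$ and that all the moment bounds used are the \emph{uniform-in-}$N$ ones from Lemma \ref{lem-2} and Corollary \ref{cor-1}; this is routine but must be stated.
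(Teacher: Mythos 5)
Your proposal follows essentially the same route as the paper's proof: It\^o's formula applied to $|X_t^{N,i}-X_t^{h,N,i}|^2$, a splitting of each coefficient increment into a time-regularity part (controlled by Lemma \ref{lem-3}) and an error-propagation part (controlled by $\mathbf{(H_2)}$ together with (\ref{a-2})--(\ref{a-4})), symmetrization over $i$ as in Lemma \ref{lem-2}, and a final application of the Bihari-type Lemma \ref{lem-1} combined with $\rho_\eta^{1/2}(x)\le x^{\a}$ for small $x$. The only structural difference is the intermediate point in the splitting: you insert $(X_{s_h}^{N,i},X_{s_h}^{N,j})$ and use (\ref{lem-3-1}), while the paper inserts $(X_s^{h,N,i},X_s^{h,N,j})$ and uses (\ref{lem-3-2}); this is immaterial. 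One small slip: $\gamma_2$ is \emph{not} bounded on bounded sets near the origin (by (\ref{1-7}) it may blow up like $\delta_2\log(x^{-1})$ as $x\downarrow 0$), so the time-regularity term should be passed through $x^2\gamma_2(x)\le\rho_\eta(x^2)$ and Jensen's inequality, exactly as in (\ref{a-4}); the resulting logarithmic loss (an estimate of order $h\log(h^{-1})$ rather than $h$) is harmless because $\a<\tfrac12$.

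The genuine problem is your proposed repair of the $T$-dependence of the Bihari exponent. For the logarithmic modulus $\rho_\eta$ the iteration-over-subintervals device gains nothing: if on $[0,T_0]$ you obtain the bound $\kappa(h)^{\exp(-CT_0)}$, then restarting on $[T_0,2T_0]$ with this quantity (plus a fresh $\kappa(h)$) as initial datum and applying Lemma \ref{lem-1} again gives $\big(\kappa(h)^{\exp(-CT_0)}+\kappa(h)\big)^{\exp(-CT_0)}\approx\kappa(h)^{\exp(-2CT_0)}$, and after $\lceil T/T_0\rceil$ steps you are back to $\kappa(h)^{\exp(-CT)}$: the exponents multiply, they do not add as in the Lipschitz/Gronwall situation. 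Likewise the alternative remark that $(Ch)^{\exp(-CT)}\le Ch^{2\a}$ for small $h$ is false whenever $\exp(-CT)<2\a$. So neither device yields the rate for \emph{every} $\a<\tfrac12$ when $T$ is large; the admissible $\a$ genuinely shrinks with $T$ under $\mathbf{(H_2)}$. For the lemma as actually stated (a constant depending on $T$ and \emph{some} $\a\in(0,\tfrac12)$, which may depend on $T$), your main estimate already suffices: a single application of Lemma \ref{lem-1} with initial datum $\kappa(h)$ of polynomial order in $h$ gives a bound of order $h^{2\a}$ for a suitable $\a=\a(T)\in(0,\tfrac12)$, and this is precisely what the paper does (the paper itself passes over this exponent degradation rather quickly, so your instinct to scrutinize the step was right, but the correct conclusion is that $\a$ deteriorates with $T$, not that the full range $(0,\tfrac12)$ can be recovered by localization in time).
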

\begin{proof}Set
$$
Z_t^{h,i}:=X_t^{N,i}-X_t^{h,N,i},
$$
then
$$
\aligned
Z_t^{h,i}=&\int_0^t\frac{1}{N}\sum_{j=1}^N\left[b\left(X_s^{N,i},X_s^{N,j}\right)-
b\left(X_{s_h}^{h,N,i},X_{s_h}^{h,N,j}\right)\dif s\right]\\ &+\int_0^t\frac{1}{N}\sum_{j=1}^N\left[\s\left(X_s^{N,i},X_s^{N,j}\right)-
\s\left(X_{s_h}^{h,N,i},X_{s_h}^{h,N,j}\right)\right]\dif W_s^i.
\endaligned
$$
Using It\^o's formula and taking the expectation, we find
$$
\aligned
&\mE\Big[\sup_{t\in[0,T]}\left|Z_t^{h,i}\right|^2\Big]=:Z(T)\\
\leq&\int_0^T\mE \Big[\sup_{r\in[0,s]}\left|Z_r^{h,i}\right|^2\left(2\l_1\gamma_1\left(\left|Z_r^{h,i}\right|\right)+
\l_2\gamma_2\left(\left|Z_r^{h,i}\right|\right)\right)\Big]\dif s\\
&+ \int_0^T\mE \Big\{ \sup_{r\in[0,s]} \frac{1}{N}\sum_{j=1}^N\left[2\l_1\left|Z_r^{h,i}\right|\left|Z_r^{h,j}\right|\gamma_1\left(\left|Z_r^{h,j}
\right|\right)+\l_2\left|Z_r^{h,j}\right|^2\gamma_2\left(\left|Z_r^{h,j}
\right|\right)\right]\Big\}\dif s\\
&+2\int_0^T\mE\Big[\sup_{r\in[0,s]} \frac{1}{N}\sum_{j=1}^N\left|Z_r^{h,i}\right|\left|b\left(X_r^{h,N,i},X_r^{h,N,j}\right)-
b\left(X_{r_h}^{h,N,i},X_{r_h}^{h,N,j}\right)\right|\Big]\dif s\\
&+\int_0^T\mE \Big[\sup_{r\in[0,s]}\frac{1}{N}\sum_{j=1}^N\left|\s\left(X_r^{h,N,i},X_r^{h,N,j}\right)-
\s\left(X_{r_h}^{h,N,i},X_{r_h}^{h,N,j}\right)\right|^2\Big]\dif s.
\endaligned
$$
With the aid of summing and symmetry in a similar way as Lemma \ref{lem-2}, we obtain
$$
\aligned
Z(T)
\leq &\int_0^T\mE \Big[\sup_{r\in[0,s]}\left|Z_r^{h,i}\right|^2\left(4\l_1\gamma_1\left(\left|Z_r^{h,i}\right|\right)+
2\l_2\gamma_2\left(\left|Z_r^{h,i}\right|\right)\right)\Big]\dif s+\int_0^T\mE\Big[\sup_{r\in[0,s]}\left|Z_r^{h,i}\right|^2\Big]\dif s\\
& +\int_0^T\mE \Big[ \sup_{r\in[0,s]}\frac{1}{N}\sum_{j=1}^N\left|b\left(X_r^{h,N,i},X_r^{h,N,j}\right)-
b\left(X_{r_h}^{h,N,i},X_{r_h}^{h,N,j}\right)\right|^2\Big]\dif s\\
&+\int_0^T\mE \Big[\sup_{r\in[0,s]}\frac{1}{N}\sum_{j=1}^N\left|\s\left(X_r^{h,N,i},X_r^{h,N,j}\right)-
\s\left(X_{r_h}^{h,N,i},X_{r_h}^{h,N,j}\right)\right|^2\Big]\dif s
\endaligned
$$
By lemma \ref{lem-3} and the condition $\mathbf{(H_2)}$ and Jensen's inequality, together with (\ref{a-2}) and (\ref{a-4}), there is a constant $\widetilde{C}=\widetilde{C}(\l_1,\l_2)$ independent of $h$, we have
$$
\aligned
Z(T)
\leq &C\int_0^T\rho_{\eta}\left(Z(s)\right)\dif s+[\rho_{\eta}(\widetilde{C}h^2)
+\rho^2_{\eta}(\widetilde{C}h)].
\endaligned
$$

And for  enough  small $x$, it is easy to see that $\rho_{\eta}^{1/2}(x)\leq x^{\alpha}(\alpha<\frac{1}{2})$. Using Lemma \ref{lem-1}, we obtain for some constant $C(T,\mE[|\xi|^2],\l_1,\l_2,c_0)>0$,
$$
\sup_{i=1,2,\cdots,N}\mE\Big(\sup_{ t\in[0,T]}\left|X_t^{N,i}-X_t^{h,N,i}\right|^2\Big)<Ch^{2\alpha},
$$
where $h$ is sufficiently small.
\end{proof}

\begin{proof}[Proof of Theorem \ref{thm-2}]
By Theorem \ref{thm-1} and Lemma \ref{lem-4}, the proof of Theorem \ref{thm-2} can be complete.
\end{proof}

{\bf Acknowledgement: } The authors would like to thank ZiMo Hao for quite useful suggestions about this paper.


\begin{thebibliography}{99}
\bibitem{ABRS}  Airachid, H.; Bossy, M.; Ricci, C.; Szpruch, L.: New particle representations for ergodic McKean-Vlasov SDEs. {\it ESAIM Proc. S.}, 65 (2019), 68-83.
\bibitem{A}  Alain-Sol, S.:
Topics in propagation of chaos. \'Ecole $\mathrm{d}'$\'Et\'e de Probabilit\'es de Saint-Flour XIX-1989, 165-251,
{\it Lecture Notes in Math. 1464}, Springer, Berlin, 1991.
\bibitem{B}Bihari, I.: A generalization of a lemma of Bellman and its application to uniqueness problem of differential
equations, {\it Acta. Math. Acad. Sci. Hungar.}, 7 (1956) 71-94.
\bibitem{BH}Bao, J.; Huang, X.: Approximations of McKean-Vlasov Stochastic Differential Equations with Irregular Coefficients. {\it Journal of Theoretical Probability}, doi.org/10.1007/s10959-021-01082-9 (2021).
\bibitem{BHY}Bao, J.; Huang, X.; Yuan, C.: Convergence rate of Euler-Maruyama scheme for SDEs with H\"older-Dini continuous drifts. {\it J. Theoret. Probab.}, 32 (2019), no. 2, 848-871.
\bibitem{BMP}Bauer, M.; Meyer-Brandis, T.; Proske, F.: Strong solutions of mean-field stochastic differential equations with irregular drift. {\it Electron. J. Probab.}, 23 (2018), 1-35.
\bibitem{BLPR}Buckdahn, R.; Li, J.; Peng, S.; Rainer, C.: Mean-field stochastic differential equations and associated
PDEs. {\it Ann. Probab.}, 45 (2017), 824-878.
\bibitem{d}Chaudru de Raynal, P.E.: Strong well-posedness of McKean-Vlasov stochastic differential equation with H\"older drift. {\it Stoch. Process Appl.}, 130 (2020), 79-107.
\bibitem{CD}Carmona, R.; Delarue, F.:
Probabilistic analysis of mean-field games. (English summary)
{\it SIAM J. Control Optim.}, 51 (2013), no. 4, 2705-2734.
\bibitem{CD1}Carmona, R.; Delarue, F.: Probabilistic theory of mean field games with applications. II. Mean field games with common noise and master equations. {\it Probability Theory and Stochastic Modelling}, 84. Springer, Cham, 2018. xxiv+697 pp.
\bibitem{CGPS}Carrillo, J.A.; Gvalani, R.S.; Pavliotis, G.A.; Schlichting, A.: Long-time behaviour and phase transitions for the Mckean-Vlasov equation on the torus. {\it Arch. Ration. Mech. Anal.}, 235 (2020), 635-690.
\bibitem{CM}Crisan, D.; McMurray, E.: Smoothing properties of McKean-Vlasov SDEs. {\it Probab. Theor. Relat. Fields}, 171 (2018), 97-148.
\bibitem{CST} Chassagneux J.-F.; Szpruch L.; Tse A.: Weak quantitative propagation of chaos via differential
calculus on the space of measures, arXiv preprint arXiv:1901.02556.
\bibitem{Da}Daniel L.: Hierarchies, entropy, and quantitative propagation of chaos for mean field diffusions, arXiv:2105.02983v1.
\bibitem{D}Dobrushin, R. L.: Prescribing a system of random variables by conditional distributions,{\it Th. Probab. and its Applic.}, 3 (1970), 469.
\bibitem{DQ}Ding, X.; Qiao, H.: Euler-Maruyama approximations for stochastic McKean-Vlasov equations with non-Lipschitz coefficients. {\it J. Theoret. Probab.}, 34 (2021), no. 3, 1408-1425.

\bibitem{dST}Dos Reis, G.; Smith, G.; Tankov, P.: Importance sampling for McKean-Vlasov SDEs, arXiv:1803.09320.
\bibitem{EGZ}Eberle, A.; Guillin, A.; Zimmer, R.: Quantitative Harris-type theorems for diffusions and McKeanVlasov processes. {\it Trans. Am. Math. Soc.}, 371 (2019), 7135-7173.
\bibitem{GA} Govindan, T.E.; Ahmed, N.U.: On Yosida approximations of McKean-Vlasov type stochastic evolution
equations. {\it Stoch. Anal. Appl.}, 33 (2015), no. 3, 383-398.

\bibitem{HMC}Huang M.; Malham$\mathrm{\acute{e}}$ R.P.; Caines P.E.: Large population stochastic dynamic games: closed-loop
McKean-Vlasov systems and the Nash certainty equivalence principle, {\it Communications in Information
and Systems}, 6 (2006), no. 3, 221-252.
\bibitem{HW}Huang, X.; Wang, F.-Y.: Distribution dependent SDEs with singular coefficients. {\it Stoch. Process. Appl.}, 129(2019), 4747-4770.
\bibitem{HW1}Huang, X.; Wang, F.-Y.: McKean-Vlasov SDEs with drifts discontinuous under Wasserstein distance, arXiv:2002.06877v2.
\bibitem{JW}Jabin, P.-E.; Wang, Z.: Mean field limit for stochastic particle systems. Active particles. Vol. 1. Advances in theory, models, and applications, 379-402, {\it Model. Simul. Sci. Eng. Technol.}, Birkh\"auser/Springer, Cham, 2017.
\bibitem{JW1}Jabin, P.-E.; Wang, Z.: Quantitative estimates of propagation of chaos for stochastic systems with $W^{1,\infty}$ kernels. Invent. Math. 214 (2018), no. 1, 523-591.
\bibitem{K}Kac M.: Foundations of kinetic theory, {\it Proceedings of The third Berkeley symposium on mathematical
statistics and probability}, 1954-1955, vol. III, pp. 171-197.
\bibitem{K1}Kac, M.: Foundations of kinetic theory. Proceedings of the Third Berkeley Symposium on Mathematical Statistics and Probability, 1954-1955, vol. III, pp. 171-197. University of California Press, Berkeley and Los Angeles, Calif., 1956.
\bibitem{LM}Li, J.; Min, H.: Weak solutions of mean-field stochastic differential equations and application to zero-sum stochastic differential games. {\it SIAM J. Control Optim.}, 54 (2016), 1826-1858.
\bibitem{M}Mckean, H.P.: A class Of Markov processes associated with nonlinear parabolic equations. {\it Proc. Nat. Acad. Sci. U.S.A.}, 56 (1966), 1907-1911.
\bibitem{MBKM}Mezerdi, M.A.; Bahlali, K.; Khelfallah, N.; Mezerdi, B.: Approximation and generic properties of McKean-Vlasov stochastic equations with continuous coefficients, arXiv: 1909.13699.
\bibitem{MV}Mishura Yu. S.; Veretennikov A. Yu.: Existence and uniqueness theorems for solutions of McKean-Vlasov stochastic equations, arXiv:1603.02212.


\bibitem{RZh}R\"ockner, M.; Zhang, X.: Well-posedness of distribution dependent SDEs with singular drifts. {\it Bernoulli}, 27 (2021), no. 2, 1131-1158.
\bibitem{RRW}Ren, P.; R\"ockner, M.; Wang, F.-Y.: Linearization of Nonlinear Fokker-Planck Equations and Applications, arXiv:1904.06795v3.
\bibitem{S}Sznitman, A.S.: Topics in propagation of chaos. In \'Ecole $\mathrm{d}'$\'Et\'e de Pro. de Saint-Flour XIX-1989, {\it Lecture Notes in Math.},  pages 165-251.
\bibitem{V}Vlasov, A.A.: The vibrational properties of an electron gas. {\it Sov. Phys.}, Usp. 10 (1968), 721.
\bibitem{W} Wang, F.-Y.: Distribution dependent SDEs for Landau type equations. {\it Stochastic Process. Appl.}, 128 (2018), 595-621.
\bibitem{XXZZ}Xia, P.; Xie, L.; Zhang, X.; Zhao, G.: Lq(Lp)-theory of stochastic differential equations. {\it Stochastic Process. Appl.}, 130 (2020), no. 8, 5188-5211.
\bibitem{Zh1}Zhang X.: Homeomorphic flows for multi dimensional SDEs with non-Lipschitz coefficients. {\it Stochastic Process. Appl.}, 115 (2005) 435-448.
\bibitem{Zh}Zhang X.: Euler-Maruyama approximations for SDEs with non-Lipschitz coefficients and applications. {\it J. Math. Anal. Appl.}, 316 (2006), no. 2, 447-458.
\bibitem{Zh2}Zhang, X.: A discretized version of Krylov's estimate and its applications. {\it Electron. J. Probab.}, 24 (2019), no. 131, 17 pp.

\end{thebibliography}
\end{document}